\newcommand{\R}{\mathbb{R}}         
\newcommand{\Z}{\mathbb{Z}}         
\newcommand{\eps}{\epsilon}         
\newcommand{\BigOh}{\mathcal{O}}
\providecommand{\norm}[1]{\lVert#1\rVert}
\newcommand{\df}{\Delta f}
\newtheorem{theorem}{Theorem}[section]
\newtheorem{rmk}{Remark}
\newtheorem{lemma}[theorem]{Lemma}
\newtheorem{cor}[theorem]{Corollary}
\author{Mathew F. Causley}
\address{Kettering University,
Department of Mathematics,
1700 University Ave.
Flint, MI 48504, USA}
\email{mcausley@kettering.edu}
\author[D.C. Seal]{David C. Seal}
\address{U.S. Naval Academy,
    Department of Mathematics, 
    572C Holloway Road,
    Annapolis, MD 21402, USA}
\email{seal@usna.edu}
\begin{document}

\title{On the convergence of spectral deferred correction methods}

\date{Last modified: \today}
\maketitle

\begin{abstract}
In this work we analyze the convergence properties of the Spectral Deferred Correction (SDC)
method originally proposed by Dutt et al.\,[{\it BIT}, {40} (2000), pp. 241--266].
The framework for this high-order ordinary differential
equation (ODE) solver
is typically described wherein a low-order approximation (such as forward or backward Euler)
is lifted to higher order accuracy by applying the \emph{same} low-order method to an error equation and then adding in the resulting defect to correct
the solution.
Our focus is not on solving the error equation to increase the order of accuracy, but on rewriting the solver as an iterative Picard integral equation solver.
In doing so, our chief finding is that it is not the low-order solver that picks up the order of accuracy with each correction,
but it is the underlying quadrature rule of the right hand side function that is solely
responsible for picking up additional orders of accuracy.
Our proofs point to a total of three sources of errors that SDC methods carry: the error at the current time point,
the error from the previous iterate, and the numerical integration error that comes from the total number of quadrature nodes
used for integration.
The second of these two sources of errors is what separates SDC methods from Picard integral equation methods; our findings indicate that as long as difference between the current and previous iterate always gets multiplied by at least a constant multiple of the time step size, then high-order accuracy can be found even if the underlying ``solver" is inconsistent the underlying ODE.
From this vantage, we solidify the prospects of extending spectral deferred correction methods to a larger class of solvers to which we present
some examples.
\end{abstract}

\section{Introduction}

The spectral deferred correction (SDC) method defines a large class of ordinary differential equation (ODE) solvers that were originally introduced in 2000 by Dutt, Greengard and
Rokhlin \cite{DuttGreenRokh00}.
These type of methods are typically introduced by defining an \emph{error equation}, and then repeatedly applying the same low-order solver
to the error equation and adding the solution back into the current approximation in order to pick up an order of accuracy.
This idea can be traced back to the work of 
Zadunaisky in 1976 \cite{Zadu76}, where the author sought out high-order solvers in order to reduce numerical 
roundoff errors for astronomical applications.
Before introducing the classical SDC methods defined in \cite{DuttGreenRokh00}, we stop here to point out some of the recent work that has been happening over the past two decades
including \cite{Minion03,LaytonMinion05,ChriOngQiu09,Layton09,ChriOngQiu10,ChriMorOngQiu11,JiaHillEvansFannTaylor13}.
We refer the interested reader to \cite{Morton10} for a nice list of references
for the first of these last two decades.  Here, we provide a sampling of some of the current topics of interest to the community.

Many variations of the original SDC method are being studied as part of an effort to
expedite the convergence of the solver.  The chief goal here is to reduce the total number of iterations required to obtain the 
same high-order accuracy of the original method.  These methods
include the option of using Krylov deferred correction methods \cite{HuangJiaMinion06,HuangJiaMinion07} as well as the
\emph{multi-level} SDC methods  \cite{LaytonMinion04,SpeRuEmMiBoKr15}.
The multi-level approach starts with a lower order interpolant and then 
successively increases the degree of the interpolant with each future sweep of the method.
This has 
the primary advantage of decreasing the overall number of function evaluations that need to be conducted, but introduces additional complications
involving the need to evaluate interpolating polynomials.
In the same vein, higher order embedded integrators have been explored within the so-called integral deferred correction (IDC) framework
 \cite{ChriOngQiu09,ChriOngQiu10}, where a moderate order solver (such as second or fourth-order Runge-Kutta method) is embedded inside a very high
 order SDC solver.  With this framework, each successive correction increases the order by the same amount as that of the base solver.
%
%
%
In addition, parallel in time solvers \cite{ChrMacOng10,Minion10,ChriOng11,EmmettMinion12} are being investigated as a mechanism
to address the needs of modern high performance computing architectures, and
adaptive time stepping options have been more recently investigated in
\cite{ChriMacOngSpi15}.  This work is based upon the nice property that SDC methods naturally embed a lower order solver inside a higher order solver.

In addition to the above mentioned extensions, various semi-implicit formulations have, and are currently being explored.  While the original solver was meant for classical non-linear ODEs, semi-implicit formulations have been conducted as early as 2003 \cite{Minion03} and are still
an ongoing topic of research \cite{LaytonMinion07,ChriMorOngQiu11}.
The effect of the choice of correctors including second order semi-implicit solvers for the error equation has been conducted in \cite{Layton08},
and an investigation into the efficiency of semi-implicit and multi-implicit spectral deferred correction methods for problems with varying temporal scales 
have been conducted in \cite{Layton09}.
Related high-order operator splitting methods have been proposed in \cite{HagZhou06,ChriGuoMorQiu14,ChriLiuXu15}, where the focus is not on
an implicit-explicit splitting, but rather on splitting the right hand side of the ODE into smaller systems that can be more readily inverted with each sweep
of the solver.

Very recent work includes applications of the SDC framework to generate exponential integrators of arbitrary orders
 \cite{Buvoli15},
exploring interesting LU decompositions of the implicit Butcher tableau on non-equispaced grids \cite{Weiser15},
further investigation into high-order operator splitting \cite{DuarteEmmett16}, a comparison of essentially non-oscillatory (ENO) versus piecewise parabolic method (PPM) coupled
with SDC time integrators \cite{KadCol16}, and additional implicit-explicit (IMEX) splittings for fast-wave slow-wave splitting constructed from within the SDC framework \cite{RuSp16}.

It is not our aim to conduct a comprehensive review and comparison of all of these methods, rather it is our goal to present rigorous analysis
of the original method that can be extended to these more complicated solvers.  With that in mind, we now turn to a brief
introduction of the spectral deferred correction framework, and in the process of doing so, we seek to directly compare this method
with that of the Picard integral formulation of a numerical ODE solver.

\subsection{Picard Iteration and the SDC Framework}

We begin by giving a brief description of classical SDC methods.  In doing so, we explain
the differences between SDC and that of Picard iteration, which defines the cornerstone of the present work.

Classical SDC solvers are designed to solve initial value problems of the form
\begin{align} \label{eqn:ODE}
    y' = \frac{dy}{dt} &= f(y), \quad t>0, \quad y(0) = y_0,
\end{align}
where $y$ can be taken to be a vector of unkowns.
The solution $y(t)$ can be expressed as an integral through formal integration:
\begin{align} \label{eqn:IF}
	y(t) = y_0 + \int_0^t f(y(s))ds, \quad t>0.
\end{align}
In this work, we assume that $f$ is Lipshitz continuous.  That is, we assume
\begin{equation}
    | f( z ) - f( w ) | \leq L | z - w |,
\end{equation}
for some constant $L\geq 0$ and all $z,w \in \R$.
This is sufficient to guarantee existence and uniqueness for solutions of IVP \eqref{eqn:ODE}, and produce rigorous numerical error bounds for SDC methods.

Consider a set of $M$ quadrature points
$0\leq \xi_1< \ldots < \xi_M \leq 1$
that partition the unit interval into a total of $N$
disjoint subintervals, defined by
\begin{equation*}
N = \begin{cases} 
	M-1 & \text{if both endpoints are used}, \\
	M & \text{if only one endpoint is used},  \\
	M+1 & \text{if neither endpoint is used}.
\end{cases}
\end{equation*}
We make this choice because a given quadrature rule may or may not include the
endpoints of the interval,
%
%
and this convention allows us to study Gaussian quadrature rules, uniformly spaced quadrature rules, Radau II quadrature rules
and others all within the same context.  With that in mind, we define
the right endpoints $\xi^R_n$, for $n=0,1,\dots N-1$, of each the $N$ subintervals as
\begin{equation*}
	\xi^R_n =
\begin{cases}
\xi_{n+1} & \text{if the left endpoint is included}, \\
\xi_{n} & \text{if the left endpoint not included},
\end{cases}
\end{equation*}
and $\xi^R_0 = 0, \xi^R_N = 1$ for the two boundary edge cases.  
Next, we define quadrature weights by 
\begin{equation}
	\label{eqn:weights}
	w_{n,m} = \int_{\xi^R_{n-1}}^{\xi^R_{n}} \ell_{m}(x)dx, \quad n = 1,2,\ldots N, \quad m = 1, 2, \ldots M,
\end{equation}
where $\ell_m(x)$ is the Lagrange interpolating polynomial of degree at most $M-1$ corresponding to the 
quadrature point $\xi_m$:
\begin{equation}
	\ell_m(x) = \frac{1}{c_m}\prod_{k=1,k\neq m}^M(x-\xi_k), \qquad c_m = \prod_{k=1,k\neq m}^M(\xi_m-\xi_k).
\end{equation}
Once these weights are obtained, approximate integral solutions,
say 
$\eta_m \approx y(\xi^R_m h)$ for $h>0$ and $m=0,1,\dots,N$, can be formed via
\begin{equation}
	\label{eqn:IFQ}
	{\bf Fully\, Implicit\, Collocation:} \quad
	\eta_n = \eta_{n-1} + h\sum_{m=1}^M w_{n,m} f(\eta_m), 
	\quad n=1,2,\dots, N,
\end{equation}
whereas the exact solution $y_m := y(\xi^R_m h)$ satisfies the exact integral
\begin{equation}
\label{eqn:ODE-exact}
    y_n = y_{n-1} + \int_{t_{n-1}}^{t_{n}} f( y(t) )\, dt, \quad t_n = \xi^R_n h, \quad n=1,2,\ldots N.
\end{equation}
By convention, $\eta_0 := y_0$ is known to high-order (because it comes from the previous time step), and $\eta_N \approx y( h )$
constitutes one ``full" time step. 
Since each substep uses information from all
substeps to construct the right-hand side, the solution is higher order, but
also requires the solution of a nonlinear system of $M$ unknowns (one for each quadrature point) at each time step.
Although this integrator has some very nice properties (e.g., it can be made to be symplectic and $L$-stable for suitably chosen
quadrature points),
it is not typically used in practice given the additional storage requirements and the larger matrices that need to be inverted
for each time step.
This is particularly relevant when it is used as the base solver for a partial differential equation, but even these bounds are being
explored as a viable option for PDE solvers such as the discontinuous Galerkin method \cite{PaPer17}.

In place of the fully implicit collocation method, Picard iteration (with numerical quadrature) defines a solver by iterating on a current solution $\eta^{[p]}_n$, $p \in \Z_{\geq 0}$ and
then creates a better approximation through
\begin{equation}
	\label{eqn:picard-explicit}
{\bf Picard\, Iteration:}	\quad \eta^{[p+1]}_n = \eta^{[p+1]}_{n-1} + h \sum_{m=1}^M w_{n,m} f(\eta^{[p]}_m), \quad n=1,2,\ldots N.
\end{equation}
Note that the current value $\eta^{[p+1]}_0 := \eta_0 \approx y_0$ is a known value that is equal to the exact solution up to high-order.
While this solver picks up a single order of accuracy with each correction, it has the unfortunate consequence of having a finite region
of absolute stability.  

The explicit spectral deferred correction framework is
\begin{equation}
    \label{eqn:EXP_p}
{\bf Exp.\, SDC:}  \quad  \eta_n^{[p+1]} = \eta_{n-1}^{[p+1]} + h_n
    \left[f(\eta_{n-1}^{[p+1]})-f(\eta_{n-1}^{[p]})\right] + h \sum_{m=1}^M w_{n,m}f(\eta_m^{[p]}),
\end{equation}
where $h_n = (\xi^R_n - \xi^R_{n-1}) h$ is the length of the $n^{th}$ sub-interval.  This solver also has a finite region of
absolute stability.

\begin{rmk}
Although traditional SDC methods were originally cast as a method that
corrects a provisional solution by solving an error equation, some modern descriptions of the same solver identify Eqn.\ \eqref{eqn:EXP_p} as
the base solver, which has the added benefit of pointing out a solid link between SDC methods
and that of iterative Picard integral equation solvers.
%
\end{rmk}

In order to construct methods that have more favorable regions of absolute stability for stiff problems,
the implicit SDC framework
exacts multiple backward Euler time steps through each iteration with
\begin{equation}
    \label{eqn:IDC_p}
    {\bf Imp.\, SDC:} \quad \eta_n^{[p+1]} = \eta_{n-1}^{[p+1]} + h_n
    \left[f(\eta_n^{[p+1]})-f(\eta_n^{[p]})\right] + h \sum_{m=1}^M w_{n,m}f(\eta_m^{[p]}).
\end{equation}
Note that this framework allows for implicit and high-order solutions to be constructed with 
greater computational efficiency
when compared to the fully implicit collocation solver defined in Eqn.\,\eqref{eqn:IFQ} because
smaller systems need to be inverted in order to take a single time step.

\begin{rmk}
It has been noted that the scaling in front of the $h_n$ term does not have impact on the order of accuracy \cite{XiaXuShu07}.  It is our aim 
with this work to solidify that claim with rigorous numerical bounds, which we do for both the explicit and implicit solvers.
\end{rmk}

Before doing so, we point out an aside that is in common with all SDC solvers.
\begin{rmk}
\label{rmk:conv}
If $\lim_{p\to\infty} \eta^{[p]}_n = \eta_n$ converges, then solutions to Eqns.\ \eqref{eqn:picard-explicit}, \eqref{eqn:EXP_p},
and \eqref{eqn:IDC_p}
converge to that of the fully implicit collocation method defined in Eqn.\ \eqref{eqn:IFQ}.  
\end{rmk}

While some SDC methods work with a fixed number of iterates in order to obtain
a desired order of accuracy, there are many examples in the literature where convergence of the SDC
iterations to the fully implicit scheme is considered.  For example, the work
in \cite{Weiser15} is wholly concerned with this convergence, and not the
accuracy of the underlying method for fixed iterations.  Moreover, for stiff
problems, it is  well understood that using a fixed number of iterations can
lead to order reduction which negates the advantage of using SDC in the first
place.  In addition, the multilevel spectral deferred correction (MLSDC) methods also are typically iterated to a residual tolerance,
since one cannot be sure that coarse level sweeps will provide enough increase
in accuracy (or decrease in the residual) \cite{SpeRuEmMiBoKr15}.

One key advantage of iterating an SDC method to convergence is that when this is done, the method inherits well known and desirable properties that the fully implicit
collocation method enjoys.  For example, Kuntzmann \cite{Kuntzmann61} and Butcher \cite{Butcher64} separately 
point out that if a total of $M$ Gaussian quadrature points are used, then the fully implicit collocation method will have superconvergence order $\BigOh(
h^{2M} )$.  (For more details, we refer the interested reader to the excellent tomes of Hairer, Wanner et al \cite{HaNoWa93,HaWa96,HaLuWa10}.
For example, see Section II.7 of \cite{HaNoWa93},
Theorem 5.2 from \cite{HaWa96}, or Theorem 1.5 from \cite{HaLuWa10}.)  In general, the maximum order of accuracy for the underlying
solver with $M$ quadrature points is $\BigOh(h^{2M})$ if they are Gauss-Legendre points, $\BigOh( h^{2M-1} )$ for the RadauIIA points, and
$\BigOh( h^{2M-2} )$ for Gauss-Lobatto points.  (The local truncation error is one order higher.)  Uniform points have $\BigOh(h^M)$ order of convergence
if $M$ is even, and $\BigOh( h^{M+1} )$ if $M$ is odd.  The extra pickup in the order of accuracy is due to symmetry of the quadrature rule.  (For example, $M=1$ 
points reproduces the so-called ``midpoint" rule,
$M=3$ reproduces Simpson's rule, and $M=5$ yields Boole's rule, each of which pick up an extra order of accuracy.)

\subsection{An Outline of the Present Work}
%
%
Despite the increasing popularity of spectral deferred correction solvers, very little work has been
performed on convergence results for this large class of methods.  The results that are currently in the literature
\cite{HagZhou06,ChriOngQiu09,ChriOngQiu10,HansenStrain11,TangXieYin13} typically 
proceed via induction on the current
order of the approximate solution, and they all hinge on solving the
\emph{error equation}, wherein the same low-order solver is applied and then a defect, or correction,
is added back into the current solution in order to increase its overall order of accuracy.  
In other recent work \cite{QuBraCheHuaKre16}, the authors consider SDC methods as fixed point iterations on a Neumann series expansion.
There, the low-order method is viewed as an efficient preconditioner (in numerical linear algebra language), and the SDC iterations are thought of as simplified Newton iterations.
Additionally, the work in \cite{Weiser15} makes use of linear algebra techniques in order to optimize coefficients so that the method converges faster to the collocation solution for stiff problems.

In this work, we do not require the use of the error equation, nor do we work with any sort of defect 
such as that defined in \cite{Zadu76}, rather we instead focus on the Picard integral underpinnings inherent to all SDC methods.
Our work solely uses fundamental numerical analysis tools: error estimates for numerical interpolation and integration.
While these tools do rely on quadrature rules, our proofs are generic enough to accommodate any set of quadrature points,
which are an ongoing discussion in terms of how to construct base solvers.
%


In this work, we prove rigorous error bounds for both implicit and explicit SDC methods, and in doing so, we expect the reader
will find that these methods can be thought of as being built upon classical Picard iteration.  Our results are applicable for general
quadrature rules, but unlike the findings
found in \cite{TangXieYin13}, where convergence is proven using the error equation, 
our work relies on the fundamental mechanics behind why the solver works.  That is, we point out that the primary contributor to the order of accuracy of the solver
lies within the integral of the residual, and not necessarily the application of any base solver to an error equation.

Indeed, our proofs follow in similar manner to that required to
prove the Picard--Lindel\"{o}f theorem, but our proofs 
take into account numerical quadrature errors and 
do not rely on exact integration of the right hand side function $f(y)$.
The primary differences between our proofs and that of the Picard-Lindel\"{o}f theorem are the following:
\begin{itemize}
    \item Spectral deferred correction methods require the use of \emph{numerical quadrature} to
        approximate the integrals presented in the Picard-Lindel\"{o}f theorem.  Our error estimates take into
        account any errors resulting from quadrature rules.
    \item Each correction step in the \emph{implicit scheme} defined in  \eqref{eqn:IDC_p} requires a nonlinear inversion, 
	whereas the Picard-Lindel\"{o}f theorem is typically proven using exact integration.
\end{itemize}


There are two main results in this work, one for explicit SDC methods, and one for implicit SDC methods.  
These are both found as corollaries to a single theorem on semi-implicit SDC.  In each case, we
produce rigorous error bounds that are applicable for generic quadrature rules.  Furthermore, we find that there are
a total of three sources of error that SDC methods carry: i) the error from the previous time step, ii) the error from the previous iterate, and 
iii) the error from the quadrature rule being used.

The outline of this paper is as follows.  In Section \ref{section:prelim}, we present some necessary lemmas concerning
error estimates for integrals of interpolants as well as some error estimates for sequences of inequalities that show up in our proofs.
In Section \ref{sec:convergence-results} we present a convergence proof for the more general case of a semi-implicit SDC solver,
and then immediately point out two corollaries that prove implicit and explicit SDC methods converge.
In Section \ref{sec:trap}, we present results for an SDC method that makes use of a higher order base solver, the trapezoidal rule.
In Section \ref{section:num-results}
we present some numerical results, where we compare explicit SDC methods with Picard iterative methods, 
we investigate modified implicit SDC methods, and we experiment with
different semi-implicit formulations of SDC methods.  Error estimates for all of these variants come from direct extensions of the proofs found in this work.
Finally, some conclusions and suggestions for future work are drawn up in Section \ref{section:conclusions}.

\section{Preliminaries}
\label{section:prelim}

We now point out a couple of important tools that we use to show that SDC solvers converge.  
Our aim is to focus on a single time step.  Without loss of generality, from here on out
we will focus on constructing a solution over the interval $[0,h]$, where $h$ is the time step size and
we will assume that $\eta_0 \approx y_0$ is a high-order approximation to the exact solution. 

\subsection{Error estimates for integrals of interpolants}

If ${\boldsymbol \eta} = \left( \eta_1, \eta_2, \cdots, \eta_M \right)$ is a set of discrete values
and $t \in [0, h]$ is a time interval we are interested in studying,
we define the \emph{interpolation operator} $I$ to be the projection onto the space of polynomials of degree at most $M-1$ via
\begin{equation}
\label{eqn:interpolation}
    I[f( {\boldsymbol \eta} )]( t ) := \sum_{m=1}^M f( \eta_m ) l_m( t / h ),
    \quad
    f( {\boldsymbol \eta} ) := \left( f( \eta_1 ), f( \eta_2), \cdots, f( \eta_M ) \right).
\end{equation}
Note that this produces the integration identity
\begin{equation}
    \int_{t_{n-1}}^{t_n} I[f( {\boldsymbol \eta })]( t )\, dt = h \sum_{m=1}^M w_{n,m}f(\eta_m )
\end{equation}
after integrating \eqref{eqn:interpolation} over a subinterval $[t_{n-1}, t_n] := [h \xi_{n-1}^R, h \xi_n^R]$,
and the weights are defined as in Eqn.\,\eqref{eqn:weights}.


Convergence results for both the explicit and the implicit SDC method (as well as Picard iteration)
require the use of the following lemma.  

\begin{lemma}
\label{lem:lipshitz-integration}
Suppose that $f\circ y \in C^{M}( [0,h] )$, $\norm{ \frac{d^M}{dt^M} \left( f\circ y \right) }_\infty \leq F$, and 
that $f$ is Lipschitz continuous with Lipshitz constant $L$.  Then
we have the estimate
\begin{equation}
\label{eqn:pic-lemma}
\begin{aligned}
	\left| \int_{t_{n-1}}^{t_n} I[f( \boldsymbol \eta ) ]( t ) - f( y (t) ) \, dt \right| \leq 
	h \norm{ {\boldsymbol \eta} - {\boldsymbol y}} W_{n} L 
	+ \frac{ F }{ M! } h^{M+1}.
\end{aligned}
\end{equation}
where the discrete norm is defined by
\begin{equation}
	\norm{ {\boldsymbol e} } := \max_{1 \leq n \leq M} \left | e_n \right |,
    \quad
    {\boldsymbol e} = \left( e_1, e_2, \cdots, e_M \right),
\end{equation}
and the constant $W_n$ is defined by
\begin{equation}
\label{eqn:Wn}
	W_n := \sum_{m=1}^M \int_{\xi^R_{n-1}}^{ \xi^R_n} | l_m(\xi) | d\xi.
\end{equation}
For a fixed quadrature rule, this constant is finite and independent of the function.
\end{lemma}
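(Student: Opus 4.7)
The plan is to split the integrand with a triangle inequality by inserting the interpolant of $f$ evaluated at the exact solution values ${\boldsymbol y} = (y_1,\ldots,y_M)$, namely $I[f({\boldsymbol y})](t)$, as an intermediate quantity. This gives
\begin{equation*}
\int_{t_{n-1}}^{t_n}\!\! \bigl(I[f({\boldsymbol\eta})](t)-f(y(t))\bigr)\,dt
= \int_{t_{n-1}}^{t_n}\!\!\bigl(I[f({\boldsymbol\eta})](t)-I[f({\boldsymbol y})](t)\bigr)\,dt
+\int_{t_{n-1}}^{t_n}\!\!\bigl(I[f({\boldsymbol y})](t)-f(y(t))\bigr)\,dt,
\end{equation*}
so the job reduces to bounding the two pieces and matching them to the two terms on the right-hand side of \eqref{eqn:pic-lemma}.

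For the first piece, I would use linearity of $I$ to write $I[f({\boldsymbol\eta})](t)-I[f({\boldsymbol y})](t)=\sum_{m=1}^M \bigl(f(\eta_m)-f(y_m)\bigr)\ell_m(t/h)$, apply the Lipschitz hypothesis to get $|f(\eta_m)-f(y_m)|\leq L\,\|{\boldsymbol\eta}-{\boldsymbol y}\|$ uniformly in $m$, and pull this constant out of the sum. The integral $\int_{t_{n-1}}^{t_n}|\ell_m(t/h)|\,dt$ becomes $h\int_{\xi^R_{n-1}}^{\xi^R_n}|\ell_m(\xi)|\,d\xi$ under the substitution $\xi=t/h$, and summing over $m$ produces exactly the factor $h\,W_n L\,\|{\boldsymbol\eta}-{\boldsymbol y}\|$ from the definition of $W_n$ in \eqref{eqn:Wn}.

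For the second piece, I would invoke the standard pointwise polynomial interpolation error bound for the $C^M$ function $t\mapsto f(y(t))$ at the $M$ nodes $\xi_m h$:
\begin{equation*}
f(y(t))-I[f({\boldsymbol y})](t) \;=\; \frac{(f\circ y)^{(M)}(\tau_t)}{M!}\prod_{m=1}^{M}(t-\xi_m h),
\end{equation*}
for some $\tau_t\in[0,h]$. Because each factor $|t-\xi_m h|\leq h$ for $t\in[0,h]$ and $\xi_m\in[0,1]$, the product is bounded by $h^M$, the derivative is bounded by $F$, and integrating over the subinterval $[t_{n-1},t_n]\subseteq[0,h]$ of length at most $h$ gives the second term $\frac{F}{M!}h^{M+1}$.

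I do not anticipate a genuine obstacle: both pieces are standard numerical analysis estimates. The only point of care is the change of variables $\xi=t/h$ so that the weights $W_n$ match the definition \eqref{eqn:Wn}, and verifying that the uniform bound $|t-\xi_m h|\leq h$ is the right loose estimate to combine cleanly with $1/M!$ and the length of integration. Adding the two bounds yields the inequality claimed in \eqref{eqn:pic-lemma}, and finiteness of $W_n$ for a fixed quadrature rule is immediate since each $\ell_m$ is a fixed polynomial of degree $M-1$.
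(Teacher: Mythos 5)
Your proposal is correct and follows essentially the same route as the paper: the same add-and-subtract of $I[f({\boldsymbol y})]$, the same Lipschitz-plus-$W_n$ bound for the first piece (the paper passes through the weights $\omega_{n,m}$ before bounding them by $\int|\ell_m|$, but this is cosmetic), and the same Lagrange remainder estimate with $|t-t_m|\leq h$ for the second.
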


\begin{proof}
Add and subtract the Lagrange interpolant $I[ f( {\boldsymbol y} ) ](t)$ for $f\circ y$ inside the left hand side of 
Eqn.\ \eqref{eqn:pic-lemma} and apply the triangle inequality:

\begin{equation}
\label{eqn:lem1-split}
\begin{aligned}
	\left| \int_{t_{n-1}}^{t_n} I[f( {\boldsymbol \eta } ) ]( t ) - f( y(t) ) \, dt \right| &\leq 
	\left| \int_{t_{n-1}}^{t_n} I[f( {\boldsymbol \eta } ) ]( t ) - I[ f( {\boldsymbol y} ) ](t)\, dt \right| \\
	&+
	\left| \int_{t_{n-1}}^{t_n} I[ f( {\boldsymbol y} ) ](t) - f( y(t) ) \, dt \right|.
\end{aligned}
\end{equation}
An estimate for the first of these two terms follows by linearity of the interpolation operator:
\begin{equation}
\begin{aligned}
	\left| \int_{t_{n-1}}^{t_n} I[f( {\boldsymbol \eta } ) ]( t ) - I[ f( {\boldsymbol y} ) ](t)\, dt \right| 
	&=
	\left| h \sum_{m=1}^M \omega_{n,m} \left( f( \eta_m ) - f( y_m ) \right) \right| \\
	&\leq h \sum_{m=1}^M | \omega_{n,m} | \left| f( \eta_m ) - f( y_m ) \right| \\
	&\leq h L \sum_{m=1}^M | \omega_{n,m} | \left| \eta_m - y_m \right| \\
	&\leq h L \norm{ {\boldsymbol \eta } - {\boldsymbol y} } \sum_{m=1}^M | \omega_{n,m} |.
\end{aligned}
\end{equation}
The quadrature weights in this estimate are be bounded above by
$| \omega_{n,m} |
	\leq \int_{\xi^R_{n-1}}^{\xi^R_{n}} \left| \ell_{m}(\xi) \right| d\xi$
and then summed over all $m$ to produce the constant $W_n$.

The second of the two integrals in \eqref{eqn:lem1-split}
is a function solely of the smoothness of $f$ and the choice of the quadrature
rule.  That is, classical interpolation error estimates result in a bound on
the $M^{th}$ derivative of $f\circ y$ through
a single point $z(t) \in [0, h]$ that yields
\begin{equation}
\label{eqn:lagrange-error}
	\left| I[ f( {\boldsymbol y} ) ](t) - f( y(t) ) \right| = \left| \frac{ (f \circ y)^{(M)} \left( z(t) \right) }{ M! } \prod_{m=1}^M (t - t_m) dt \right|
	\leq \frac{F}{M!} \prod_{m=1}^M | t - t_m|.
\end{equation}
Because $|t-t_m| \leq h$ for each $m$, the result follows after integration.
%
%
\end{proof}

We stop to point out that due to the Runge-phenomenon, the coefficient $W_n$ defined in Equation \eqref{eqn:Wn} can become quite large
if a large number of quadrature points are chosen for constructing the polynomial interpolants required for the SDC method.
In Table \ref{tab:Wn}, we demonstrate a few sample values when uniform, Chebyshev, Gauss-Legendre, Gauss-Radau, and Gauss-Lobatto quadrature
nodes are used to construct
the polynomial interpolants.  Uniform quadrature points tend to start performing
quite poorly in the teens; however even a small amount of points, say five or six, produces a high-order numerical method compared to other
ODE solvers because in this regime the error constant is reasonable.  
The selection of quadrature points that minimizes this portion of the error constant is the Gauss-Lobatto nodes, but because convergence is found through refinement in $h$ rather than $p$, any of these points will produce a method that converges, provided the exact solution has a suitable degree of regularity.

%

\begin{table}
{\footnotesize
\caption{
Maximum size of the Lagrange polynomials 
$
\max_{1 \leq n \leq M} \max_{\xi \in [0,1] } | \ell_n(\xi) |
$
for different quadrature points.
The Gauss-Legendre, Gauss-Radau, and Gauss-Lobatto quadrature rules with $M$ points have degrees of precision
$2M+1, 2M$, and $2M-1$, respectively.
\label{tab:Wn}}
\begin{tabular}{|r||c|c|c|c|c|}
\hline
& \multicolumn{5}{|c|}{{\bf Type of Quadrature Points}} \\ \hline 
\bf{M} & \bf{Uniform} & \bf{Chebyshev} & \bf{Legendre} & \bf{Gauss-Radau} & \bf{Gauss-Lobatto} \\\hline 
\hline
$ 2$ & $1.000$ & $1.207$ & $1.366$ & $1.500$ & $1.000$ \\ 
\hline
$ 3$ & $1.000$ & $1.244$ & $1.479$ & $1.558$ & $1.000$ \\ 
\hline
$ 4$ & $1.056$ & $1.257$ & $1.527$ & $1.578$ & $1.000$ \\ 
\hline
$ 5$ & $1.152$ & $1.263$ & $1.551$ & $1.586$ & $1.000$ \\ 
\hline
$ 6$ & $1.257$ & $1.266$ & $1.566$ & $1.591$ & $1.000$ \\ 
\hline
$ 7$ & $1.362$ & $1.268$ & $1.575$ & $1.594$ & $1.000$ \\ 
\hline
$ 8$ & $1.663$ & $1.269$ & $1.581$ & $1.596$ & $1.000$ \\ 
\hline
$ 9$ & $2.550$ & $1.270$ & $1.585$ & $1.597$ & $1.000$ \\ 
\hline
$ 10$ & $4.028$ & $1.271$ & $1.588$ & $1.598$ & $1.000$ \\ 
\hline
$ 11$ & $6.506$ & $1.271$ & $1.590$ & $1.599$ & $1.000$ \\ 
\hline
$ 12$ & $10.963$ & $1.271$ & $1.592$ & $1.599$ & $1.000$ \\ 
\hline
$ 13$ & $18.340$ & $1.272$ & $1.594$ & $1.600$ & $1.000$ \\ 
\hline
$ 14$ & $32.060$ & $1.272$ & $1.595$ & $1.600$ & $1.000$ \\ 
\hline
$ 15$ & $54.998$ & $1.272$ & $1.596$ & $1.600$ & $1.000$ \\ 
\hline
$ 16$ & $98.531$ & $1.272$ & $1.596$ & $1.600$ & $1.000$ \\ 
\hline
$ 17$ & $172.176$ & $1.272$ & $1.597$ & $1.601$ & $1.000$ \\ 
\hline
$ 18$ & $313.675$ & $1.272$ & $1.597$ & $1.601$ & $1.000$ \\ 
\hline
$ 19$ & $556.491$ & $1.273$ & $1.598$ & $1.601$ & $1.000$ \\ 
\hline
$ 20$ & $1026.313$ & $1.273$ & $1.598$ & $1.601$ & $1.000$ \\ 
\hline
$ 30$ & $496210.554$ & $1.273$ & $1.600$ & $1.602$ & $1.000$ \\ 
\hline
$ 50$ & $208948162475.383$ & $1.273$ & $1.601$ & $1.602$ & $1.000$ \\ 
\hline
\hline 
\end{tabular} }
\end{table}


\subsection{Error estimates for sequences of inequalities}

Finally, we require a second Lemma as well as a simple Corollary.  Both of
these are stated in \cite{Farago13} and their proofs are elementary.
\begin{lemma}
\label{lem:recursive-errors}
If $\{ a_n \}_{n \in \Z_{\geq 0}}$ is a sequence that satisfies 
$|a_n| \leq A |a_{n-1}| + B$ with $A \neq 1$, then 
\begin{equation}
\label{eqn:lem1}
    |a_n| \leq A^n |a_0| + \frac{ A^n - 1 }{ A - 1 } B.
\end{equation}
\end{lemma}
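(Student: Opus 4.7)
The plan is to prove this by straightforward induction on $n$, which is the cleanest route given the recursive nature of the hypothesis. An alternative, essentially equivalent, route would be to iterate the inequality directly and sum a finite geometric series; I would present the inductive version since it aligns with how the bound will be used downstream in Section \ref{sec:convergence-results}.

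For the base case $n=0$, the asserted bound reads $|a_0| \le A^0 |a_0| + \frac{A^0-1}{A-1}B = |a_0|$, which is trivially true (and, importantly, this is where the factor $\frac{A^n-1}{A-1}$ is normalized so as to vanish at $n=0$, confirming that the stated formula is consistent with the recursion's initial data).

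For the inductive step, I assume $|a_{n-1}| \le A^{n-1}|a_0| + \frac{A^{n-1}-1}{A-1} B$ and apply the recursive hypothesis $|a_n| \le A|a_{n-1}| + B$. Substituting the inductive bound and distributing yields
\begin{equation*}
|a_n| \le A^n |a_0| + \frac{A(A^{n-1}-1)}{A-1} B + B
= A^n |a_0| + \frac{A^n - A + (A-1)}{A-1} B
= A^n |a_0| + \frac{A^n - 1}{A-1} B,
\end{equation*}
which is the desired bound. The only step requiring any care is the algebraic simplification of the constant term, where one must ensure that $B$ is rewritten as $\frac{A-1}{A-1}B$ before combining fractions; this is also the only place the hypothesis $A \neq 1$ is used, since otherwise the geometric sum $1 + A + \cdots + A^{n-1}$ collapses to $n$ and the quoted closed form is meaningless.

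There is no real obstacle here; the only thing to watch is that the stated bound covers both $A>1$ (where $\frac{A^n-1}{A-1} > 0$) and $0 \le A < 1$ (where both numerator and denominator are negative, so the quotient is still positive and the bound is meaningful and in fact tends to $\frac{B}{1-A}$ as $n\to\infty$). No assumption on the sign of $A-1$ is needed in the argument, so a single inductive proof handles every case allowed by the hypothesis.
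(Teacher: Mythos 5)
Your inductive argument is correct, and it is essentially the same proof the paper gives: the paper simply unrolls the recursion and sums the resulting finite geometric series $1 + A + \cdots + A^{n-1} = \frac{A^n-1}{A-1}$, which is exactly the identity your induction step verifies term by term. Your added remarks on the base case and on the sign of $\frac{A^n-1}{A-1}$ for $0 \le A < 1$ are fine but not needed for the paper's use of the lemma, which (via Corollary \ref{cor:geo-estimate}) only invokes the case $A>1$.
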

\begin{proof}
    Recursively apply the inequality and sum the remaining finite geometric series.
\end{proof}

\begin{cor}
\label{cor:geo-estimate}
If $A > 1$ and $\{ a_n \}_{n \in \Z}$ is a sequence that satisfies
$|a_n| \leq A |a_{n-1}| + B$,
then
\begin{equation}
    |a_n| \leq A^n |a_0| + n A^{n-1} B
\end{equation}
for every $n$.
\end{cor}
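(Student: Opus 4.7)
The plan is to derive the corollary directly from Lemma \ref{lem:recursive-errors} by replacing the geometric factor $(A^n-1)/(A-1)$ with the simpler (but looser) bound $n A^{n-1}$. Since the hypotheses of the lemma are exactly those of the corollary (assuming $A>1$ automatically gives $A \neq 1$), the lemma applies and yields
\begin{equation*}
    |a_n| \leq A^n |a_0| + \frac{A^n - 1}{A - 1} B.
\end{equation*}
Thus the only content left is the purely algebraic inequality
\begin{equation*}
    \frac{A^n - 1}{A - 1} \leq n A^{n-1} \quad \text{for } A > 1,\ n \geq 0.
\end{equation*}

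To establish this, the cleanest route I see is to recognize the left-hand side as a finite geometric sum,
\begin{equation*}
    \frac{A^n - 1}{A - 1} = 1 + A + A^2 + \cdots + A^{n-1} = \sum_{k=0}^{n-1} A^k.
\end{equation*}
Since $A > 1$, each term in this sum satisfies $A^k \leq A^{n-1}$, and there are exactly $n$ terms, so the sum is bounded above by $n A^{n-1}$. Multiplying through by $B \geq 0$ (which is implicit in the setup, since $B$ is an error bound) and combining with the previous display yields the desired inequality.

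Since the algebraic manipulation is elementary and both pieces (the lemma and the sum bound) are straightforward, I do not anticipate any real obstacle here; the main step is simply choosing to rewrite $(A^n-1)/(A-1)$ as a geometric sum rather than trying to prove the inequality by induction, which would be equally valid but less transparent. An induction on $n$ would use the base case $n=0$ (both sides equal $0$) and the step $\sum_{k=0}^{n} A^k = A \sum_{k=0}^{n-1} A^k + 1 \leq A \cdot n A^{n-1} + 1 \leq (n+1) A^n$, where the last inequality uses $1 \leq A^n$. Either presentation is acceptable, but the direct geometric-sum argument is shorter and makes the proof a one-liner after invoking the lemma.
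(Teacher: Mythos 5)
Your proposal is correct and follows exactly the same route as the paper's own proof: invoke Lemma \ref{lem:recursive-errors} and then bound the finite geometric series $\frac{A^n-1}{A-1} = \sum_{k=0}^{n-1} A^k \leq n A^{n-1}$ term by term. No gaps.
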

\begin{proof}
By Lemma \ref{lem:recursive-errors}, the sequence satisfies
Eqn.\,\eqref{eqn:lem1}.  We estimate the (finite) geometric series by
\begin{equation}
    \frac{ A^n - 1 }{ A - 1 } = 1 + A + \cdots A^{n-1} 
    \leq n A^{n-1}
\end{equation}
because there are a total $n$ terms and each $A^{l} \leq A^{n-1}$ for $l=0,1,\dots, n-1$.
\end{proof}
With these preliminaries out of the way, we are now ready to state and prove our main result.

\section{Convergence Results}
\label{sec:convergence-results}

In place of separately proving explicit and implicit results for \eqref{eqn:ODE}, we instead consider an umbrella class of ODEs, defined
through a semi-implicit formulation:
\begin{equation}
\label{eqn:ODE-lip}
y' = f(y), \quad f(y) = f_I(y) + f_E(y), \quad y(0) = y_0,
\end{equation}
where $f_I$ is to be treated implicitly, and $f_E$ is to be treated explicitly.
We assume that both $f_I$ and $f_E$ have Lipshitz constants $L_I$ and $L_E$, respectively.
In turn, this implies that $f$ has a Lipshitz constant of $L := L_I + L_E$.
In the case where $f_I \equiv 0$, we set $L_I = 0$,
and in the case where $f_E \equiv 0$, we set $L_E = 0$.

The classical semi-implicit SDC (SISDC) method
for \eqref{eqn:ODE-lip}
begins with a \emph{provisional solution}, or initial guess 
$\eta^{[0]}_n \approx y(\xi_n h)$, that is typically defined with
\begin{equation}
    \label{eqn:SISDC_0}
    \eta^{[0]}_n = \eta^{[0]}_{n-1} + h_n f_I(\eta^{[0]}_n) + h_n f_E( \eta^{[0]}_{n-1} ), \quad n=1,2,\ldots N,
\end{equation}
where $h_n = (\xi^R_n - \xi^R_{n-1}) h$.  This yield a first-order implicit-explicit (IMEX) predictor for the solution based upon a forward-backward Euler
method.  Our numerical (and analytical) results indicate the ``predictor'' step has little bearing on the overall order of accuracy of the
solver.  For example, it is possible to hold the solution constant for the initial iteration and still obtain high-order accuracy, albeit
with one additional iteration.


The classical SISDC method \cite{Minion03}  iterates on the provisional solution through 
\begin{equation}
\label{eqn:sisdc-classical}
\begin{aligned}
{\bf SISDC:} \quad 
    \eta_n^{[p+1]} &= \eta_{n-1}^{[p+1]} + h_n
    \left[  f_I(\eta_{n}^{[p+1]})-f_I(\eta_{n}^{[p]})\right] \\
    &+ h_n 
    \left[  f_E(\eta_{n-1}^{[p+1]})-f_E(\eta_{n-1}^{[p]})\right] + 
    h \sum_{m=1}^M w_{n,m}f(\eta_m^{[p]}),
\end{aligned}
\end{equation}
where $\eta^{[p+1]}_0 = \eta_0$ is a known quantity.  
This value is typically taken to be the result from the previous time step,
and we assume that it is known to high-order accuracy.
Our focus is on the local truncation error, in which case we assume that the error at time zero is non-zero.
That is, we assume $e_0 = \eta_0 - y_0 \neq 0$.
Once the single step error is established, a global error can be directly found using textbook techniques.
In the event where $f_I \equiv 0$, we end up with the explicit SDC 
method defined in \eqref{eqn:EXP_p},
and when $f_E \equiv 0$, we end up the implicit SDC defined in 
\eqref{eqn:IDC_p}.

We repeat that the collocation method
defined in \eqref{eqn:IFQ} requires simultaneously solving for each
$\eta_n$ and is clearly more expensive than multiple applications of the backward
Euler method found in \eqref{eqn:sisdc-classical}, either on part of or the entire right hand side.  In the event where $f_I \equiv 0$,
then the method should be less expensive to run for a single time step, but the regions of absolute stability 
suffer \cite{LaytonMinion05,LaytonMinion07}.
We also repeat that provided that if $\boldsymbol \eta^{[p]}$ converge as $p \to \infty$, then
Eqn.\,\eqref{eqn:sisdc-classical} defines a solution to Eqn.\,\eqref{eqn:IFQ}.
Proving which initial guesses converge to the fully implicit solver
is beyond the scope of this work.  Currently, our aim is to
show that each correction step in the SDC framework picks up
at least a single order of accuracy to the order predetermined by the quadrature rule.

\subsection{Statement of the Main Result}

\begin{theorem} 
\label{thm:error-sisdc}
The errors for a single step of the semi-implicit SDC method satisfy
\begin{equation}
\label{eqn:theorem-sisdc}
        |e^{[p+1]}_n| \leq e^{N h ( 2 L_I + L_E ) } |e_{0}| + C_1 h \norm{ {\bf e}^{ [p] } } + C_2 h^{M+1},
\end{equation}
provided $h L_I < 1 / 2$, and where $N$ is the number of intervals under consideration, $L := L_I + L_E$ is the Lipschitz
constant of $f$,
\[
C_1 = 2 N e^{Nh (2 L_I + L_E) } W, \quad \text{and} \quad
C_2 = 2 N e^{Nh( 2 L_I + L_E) } \frac{F}{M!}
\]
are constants that depend only on $f$, the exact solution $y$, and the
selection of quadrature points.

\end{theorem}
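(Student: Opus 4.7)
The plan is to subtract the exact integral identity \eqref{eqn:ODE-exact} from the SISDC update \eqref{eqn:sisdc-classical} on the subinterval $[t_{n-1}, t_n]$, producing an error equation
\begin{equation*}
e_n^{[p+1]} = e_{n-1}^{[p+1]} + h_n\bigl[f_I(\eta_n^{[p+1]}) - f_I(\eta_n^{[p]})\bigr] + h_n\bigl[f_E(\eta_{n-1}^{[p+1]}) - f_E(\eta_{n-1}^{[p]})\bigr] + R_n^{[p]},
\end{equation*}
with $e_k^{[q]} := \eta_k^{[q]} - y_k$ and $R_n^{[p]} := \int_{t_{n-1}}^{t_n}\bigl(I[f({\boldsymbol \eta}^{[p]})](t) - f(y(t))\bigr)\,dt$. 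This single display already exposes the three error sources advertised in the abstract: propagation from $e_{n-1}^{[p+1]}$, the implicit/explicit correction terms comparing the current and previous iterates, and the Picard-style residual $R_n^{[p]}$.

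First I would estimate each piece in isolation. Lipshitz continuity of $f_I$ and $f_E$ turns the correction terms into bounds of the form $h_n L_I\bigl(|e_n^{[p+1]}| + |e_n^{[p]}|\bigr)$ and $h_n L_E\bigl(|e_{n-1}^{[p+1]}| + |e_{n-1}^{[p]}|\bigr)$, at the cost of introducing an implicit contribution $h_n L_I |e_n^{[p+1]}|$ that must eventually be moved to the left-hand side. The residual $R_n^{[p]}$ is precisely the object bounded in Lemma \ref{lem:lipshitz-integration}, yielding $h L \norm{{\boldsymbol e}^{[p]}} W_n + (F/M!) h^{M+1}$.

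Next I would collect the resulting inequality as $(1 - h_n L_I)|e_n^{[p+1]}| \leq (1 + h_n L_E)|e_{n-1}^{[p+1]}| + \tilde{B}_n$, where $\tilde{B}_n$ lumps together the iterate-difference and quadrature contributions. The hypothesis $hL_I < 1/2$ makes $1/(1 - h_n L_I) \leq 1 + 2 h_n L_I \leq e^{2 h_n L_I}$, so after division the recursion takes the clean form $|e_n^{[p+1]}| \leq e^{h_n(2L_I + L_E)}|e_{n-1}^{[p+1]}| + B_n$, ready to feed into Corollary \ref{cor:geo-estimate}. With $A := e^{h(2L_I + L_E)}$ serving as a uniform upper bound on the per-step amplification, applying that Corollary across $n = 1, \dots, N$ yields the prefactor $A^n \leq e^{Nh(2L_I + L_E)}$ on the initial error $e_0 = e_0^{[p+1]}$, plus a forcing term of size $n A^{n-1}\max_k B_k$, from which the constants $C_1$ and $C_2$ drop out after bounding $n \leq N$ and $W_n \leq W$ uniformly.

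The main obstacle I anticipate is bookkeeping rather than any deep idea: consolidating three separate iterate-difference contributions — proportional to $h_n L_I$, $h_n L_E$, and $h L W_n$ — into a single expression $C_1 h \norm{{\boldsymbol e}^{[p]}}$ matching the stated constant, while simultaneously absorbing the factor $2$ coming from the inversion $(1-h_n L_I)^{-1}$ and the factor $N$ from Corollary \ref{cor:geo-estimate}. A minor care point is that the recursion's base value is $e_0^{[p+1]} = e_0$, which is why the first term of \eqref{eqn:theorem-sisdc} is proportional to $|e_0|$; this also explains why the precise form of the predictor \eqref{eqn:SISDC_0} does not enter the bound for $p \geq 0$.
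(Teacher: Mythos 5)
Your proposal is correct and follows essentially the same route as the paper's proof: the same error-evolution identity, the same application of Lemma \ref{lem:lipshitz-integration} to the residual, the same move of $h_n L_I |e_n^{[p+1]}|$ to the left followed by the geometric-series bound $1/(1-hL_I)\leq 1+2hL_I\leq e^{2hL_I}$ (with $1/(1-hL_I)\leq 2$ on the forcing term), and the same final appeal to Corollary \ref{cor:geo-estimate} with $n\leq N$. The bookkeeping you flag is handled in the paper exactly as you anticipate, by defining $W=\max_n(1+W_nL)$ to merge the iterate-difference contributions.
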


In subsection \ref{subsec:cor} we point out two corollaries to this result, one for implicit and one for explicit SDC,
but before proving this theorem, we stop to point out an important observation that is applicable to any of the aforementioned methods.

\begin{rmk}
The statement of this theorem highlights that there are a total of three sources of error that SDC methods admit,
which are ordered by appearance in the right hand side of 
\eqref{eqn:theorem-sisdc}:
\begin{enumerate}
\item the error at the current time step: $e_0 = \eta_0 - y_0$,
\item the error from the previous iterate (or predictor): ${\boldsymbol e}^{[p]} = {\boldsymbol \eta}^{[p]} - {\bf y}$, and
\item the number of quadrature points, $M$.
\end{enumerate}
\end{rmk}
%
The most important take-away is that 
{\bf because the error from the previous
iterate, $\norm{ {\bf e}^{ [p] } }$, gets multiplied by a factor of $h$, the
error gets improved by one order 
of accuracy with each correction.} 
Of course this order reaches a maximum order based upon the number of the quadrature points chosen,
which can be seen in the third source of error.  This can be improved by
selecting quadrature points with superconvergence properties such as the
Gaussian or Gauss-Lobatto quadrature points.  
Finally, please note that we make no comment
about how the ``previous" function values were found.  This is intentionally done so, because we would like to focus our attention
on the impact of what a single correction does the solution.  In doing so, this permits the analysis to apply to parallel implementations of SDC methods
where synchronizations between different correctors (threads) are seldom seen \cite{ChrMacOng10,EmmettMinion12}.

\subsection{Proof of the Main Result}
\label{subsec:proof}

\begin{proof}
We subtract the exact equation \eqref{eqn:ODE-exact} from \eqref{eqn:sisdc-classical} and find that the
discrete error evolution equation is
\begin{equation}
\label{eqn:error-imp}
\begin{aligned}
    e^{[p+1]}_n = e^{[p+1]}_{n-1} + h_n 
    \left[f_I(\eta_{n}^{[p+1]})-f_I(\eta_{n}^{[p]})\right] +
    h_n \left[f_E(\eta_{n-1}^{[p+1]})-f_E(\eta_{n-1}^{[p]})\right] \\ +
    \int_{t_{n-1}}^{t_n} I[f( {\boldsymbol \eta}^{[p]} ) ]( t ) - f( y(t) ) \, dt.
\end{aligned}
\end{equation}
%
The last term in this summand can be estimated by 
appealing to Lemma \ref{lem:lipshitz-integration} and observing
\begin{equation}
    |I_n| :=  \left| \int_{t_{n-1}}^{t_n} I[f( {\boldsymbol \eta}^{[p]} ) ]( t ) - f( y(t) ) \, dt \right| 
    \leq 
	h \norm{ {\boldsymbol e}^{[p]} } W_{n} L 
	+ \frac{ F }{ M! } h^{M+1}.
\end{equation}
We estimate the other terms by making use of their respective Lipshitz constants:
\begin{equation}
\begin{aligned}
|e^{[p+1]}_n| &\leq |e^{[p+1]}_{n-1}| +
    h_n \left| f_I(\eta_{n}^{[p+1]})-f_I(\eta_{n}^{[p]})\right| +
    h_n \left| f_E(\eta_{n-1}^{[p+1]})-f_E(\eta_{n-1}^{[p]})\right| +
    \left| I_n \right|
\\ &\leq |e^{[p+1]}_{n-1}| +
    h L_I \left| \eta_{n}^{[p+1]} - \eta_{n}^{[p]}  \right| +
    h L_E \left| \eta_{n-1}^{[p+1]} - \eta_{n-1}^{[p]}  \right| +
         \left| I_n \right|
\\ &\leq |e^{[p+1]}_{n-1}| 
        + h L_I \left( | e_{n}^{[p+1]} | + | e_{n}^{[p  ]} | \right)
        + h L_E \left( | e_{n-1}^{[p+1]} | + | e_{n-1}^{[p  ]} | \right) +
         \left| I_n \right|.
\end{aligned}
\end{equation}
The third line follows from the second by adding and subtracting $y_n$ to the inside each of the absolute values containing
$|\eta_{n}^{[p+1]} - \eta_{n}^{[p]}|$ and $|\eta_{n-1}^{[p+1]} - \eta_{n-1}^{[p]}|$.
Note that we
also make use of the fact that $h_n \leq h$,
although this too can be relaxed.

We continue
by subtracting $h L_I | e_n^{[p+1]} |$ from both sides, dividing by
$1-h L_I > 0$, recognizing that 
$h_n < h$, and collecting the remaining terms involving the ``explicit" portions:
\begin{equation}
\begin{split}
\label{eqn:error-estimate-imp}
    |e^{[p+1]}_n| &\leq 
    \frac{1}{ 1 - h L_I } \left[ \left( 1 + h L_E \right) |e^{[p+1]}_{n-1}| 
        + h L_I \left| e^{[p]}_{n} \right| 
        + h L_E \left| e^{[p]}_{n-1} \right|
        + |I_n| \right] \\
	&\leq
	    \frac{1}{ 1 - h L_I } \left[ \left( 1 + h L_E \right) |e^{[p+1]}_{n-1}| 
        + h L \norm{ {\boldsymbol e}^{[p]} }
        + |I_n| \right] \\
    &\leq
    	\frac{1}{ 1 - h L_I } \left[ \left( 1 + h L_E \right) |e^{[p+1]}_{n-1}| 
    		+ h L (1 +  W_n ) \norm{ {\boldsymbol e}^{[p]} } + \frac{F}{M!} h^{M+1} \right] \\
    &\leq
    \frac{1  + h L_E }{ 1 - h L_I }  |e^{[p+1]}_{n-1}| + 
    \frac{1}{ 1 - h L_I } \left[
    h W \norm{ e^{[p]} } + \frac{F}{M!} h^{M+1} \right],
\end{split}
\end{equation}
where we define $W := \max_{1 \leq n \leq N} \left( 1 + W_n L \right)$.

We make use of two separate estimates for $1/(1-hL_I)$ to estimate the two terms
found in the right hand side of \eqref{eqn:error-estimate-imp}. For the first term, we expand
the geometric series and keep the first two terms:
\begin{equation}
    \frac{1}{ 1 - h L_I } = 1 + (hL_I) + (hL_I)^2 + \cdots = 1 + (hL_I) + (hL_I)^2 \frac{1}{1-hL_I}.
\end{equation}
This is valid because $hL_I < 1$.  Additionally, $hL_I < 1/2$, and therefore
\begin{equation}
h L_I < 1 - h L_I \implies
	\frac{(hL)^2}{1-h L_I } <  h L_I.
\end{equation}
Together, these estimates imply that the first term can be estimated with
\begin{equation}
\label{eqn:geo-estimate}
    \frac{1}{ 1 - h L_I } \leq 1 + 2 h L_I \leq e^{2hL_I}.
\end{equation}
For the second term, we have $1/(1-hL_I) \leq 2$ for all $hL_I \in [0,1/2]$.  This
leads us to observe that
\begin{equation}
    |e^{[p+1]}_n| \leq 
    e^{2hL_I} \left( 1 + h L_E \right) |e^{[p+1]}_{n-1}| + 2 \left( h W \norm{ {\bf e}^{[p]} }
    + \frac{F}{M!} h^{M+1} \right).
\end{equation}

Next, we appeal to Corollary \ref{cor:geo-estimate}
and make use of $A = e^{2hL_I} \left( 1 + h L_E \right) > 1$ and 
$B = 2 \left( h W \norm{ {\bf e}^{[p]} } 
    + \frac{F}{M!} h^{M+1}
\right)$
to conclude that
\begin{equation}
	|e^{[p+1]}_n| \leq e^{ 2 h n L_I } \left( 1 + h L_E \right)^n |e_{0}| +n 
	e^{2h (n-1) L_I} \left( 1 + h L_E \right)^{n-1}
	2 \left( h W \norm{ {\bf e}^{ [p] } } + \frac{ F }{ M! } h^{M+1} \right).
\end{equation}
Since $1 + h L_E \leq e^{h L_E}$ and $n \leq N$, we have the desired result.
%
%
\end{proof}

\subsection{Corollaries of Main Result: Implicit and Explicit Error Estimates}
\label{subsec:cor} 

With the general case proven in Theorem \ref{thm:error-sisdc}, we find results
for both implicit, as well as explicit SDC solvers.
An immediate corollary to Theorem \ref{thm:error-sisdc} can be found by setting $f_E \equiv 0$, in which case $L_I$ becomes the Lipshitz constant for $f$,
and the SISDC solver reduces to classical SDC with backward Euler defined in \eqref{eqn:IDC_p}.

\begin{cor} 
\label{cor:error-imp}
The errors for a single step of the implicit SDC method defined in \eqref{eqn:IDC_p} satisfy
\begin{equation}
\label{eqn:theorem-imp}
        |e^{[p+1]}_n| \leq e^{2 N h L} |e_{0}| + C_1 h \norm{ {\bf e}^{ [p] } } + C_2 h^{M+1}
\end{equation}
provided $h < 1 / (2L)$.  
The constants $C_1$ and $C_2$
depend only on the smoothness of $f$, the exact solution $y$, and the choice
of quadrature points.
\end{cor}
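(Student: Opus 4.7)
The plan is to obtain this corollary as a direct specialization of Theorem~\ref{thm:error-sisdc} to the case in which the ODE has no explicit component. First I would observe that if we set $f_E \equiv 0$ in the SISDC update \eqref{eqn:sisdc-classical}, the two differences $f_E(\eta_{n-1}^{[p+1]}) - f_E(\eta_{n-1}^{[p]})$ vanish identically, and what remains is precisely the implicit SDC iteration \eqref{eqn:IDC_p}. Correspondingly, following the convention stated immediately after \eqref{eqn:ODE-lip}, we set $L_E = 0$, in which case $L = L_I + L_E$ collapses to $L = L_I$, i.e.\ the scalar $L_I$ now plays the role of the full Lipshitz constant of $f$.

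Next I would substitute these identifications into the conclusion \eqref{eqn:theorem-sisdc}. The hypothesis $hL_I < 1/2$ becomes $h < 1/(2L)$, matching the stated step-size restriction. The exponent in the leading term simplifies as $2L_I + L_E = 2L$, so that $e^{Nh(2L_I+L_E)} = e^{2NhL}$ and the coefficient of $|e_0|$ is exactly as claimed. The two constants produced by the theorem, namely $C_1 = 2N e^{Nh(2L_I+L_E)} W$ and $C_2 = 2N e^{Nh(2L_I+L_E)} F/M!$, inherit the same substitution to become
\[
C_1 = 2N e^{2NhL} W, \qquad C_2 = 2N e^{2NhL} \frac{F}{M!},
\]
each depending only on $f$ (through $L$ and the bound $F$ on $(f\circ y)^{(M)}$), on the regularity of the exact solution $y$, and on the quadrature nodes (through $W = \max_{1\leq n \leq N}(1 + W_n L)$ and $M$).

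There is really no obstacle here beyond bookkeeping, since everything in the proof of Theorem~\ref{thm:error-sisdc} goes through verbatim when $L_E = 0$; the only point worth double checking is that the step in which we used $A = e^{2hL_I}(1 + hL_E) > 1$ to invoke Corollary~\ref{cor:geo-estimate} remains valid, which it does because $A = e^{2hL} > 1$ whenever $L > 0$ (and the bound is trivial when $L = 0$). I would therefore present the corollary's proof in a single short paragraph: specialize $f_E \equiv 0$, note the reduction of \eqref{eqn:sisdc-classical} to \eqref{eqn:IDC_p}, and read off \eqref{eqn:theorem-imp} from \eqref{eqn:theorem-sisdc}.
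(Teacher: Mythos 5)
Your proposal is correct and matches the paper's own argument exactly: the corollary is obtained by setting $f_E \equiv 0$ (so $L_E = 0$ and $L = L_I$) in Theorem \ref{thm:error-sisdc}, whereupon the SISDC iteration reduces to \eqref{eqn:IDC_p} and the exponent $2L_I + L_E$ becomes $2L$. The bookkeeping on the constants and the step-size restriction is exactly as the paper intends.
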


It is worth noting that the error estimate provided here is an \emph{asymptotic} error estimate.  That is, 
one key assumption that we have to make is that $h < 1 / (2L)$, which we do not 
have to make for the explicit case.  
Unfortunately, one key benefit of implicit solvers is that large time steps can be taken, in which case it is certainly possible
that the solver does not obey this assumption.  For these cases, a rigorous error estimate and analysis when $h > 1/ (2L)$
would make for an interesting result, which would be especially important for multiscale problems that contain large time scale separations.
This observation is beyond the scope of the present work.

A related corollary for explicit solvers with tighter error bounds can be found.  The result is the following.

\begin{cor}
\label{cor:error-exp}
The errors for a single step of the explicit SDC method defined in 
\eqref{eqn:EXP_p}
satisfy
\begin{equation}
\label{eqn:theorem-exp}
        |e^{[p+1]}_n| \leq e^{N h L} |e_{0}| + C_1 h \norm{ {\bf e}^{ [p] } } + C_2 h^{M+1},
\end{equation}
where $N$ is the number of intervals under consideration, $L$ is the Lipschitz
constant of $f$ for the ODE $y' = f(y)$, 
and $C_1$ and $C_2$ are constants that depend only on $f$, the exact solution $y$, and the selection of quadrature points.
\end{cor}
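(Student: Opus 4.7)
The plan is to derive the bound directly, following the same architecture as the proof of Theorem \ref{thm:error-sisdc} but exploiting the simplification $f_I \equiv 0$ (equivalently $L_I = 0$, $L_E = L$). With no implicit term to invert, the factor $1/(1-hL_I)$ never appears, which both removes the step-size restriction $h < 1/(2L)$ and eliminates the factor of $2$ that inflates the constants $C_1$ and $C_2$ in the general semi-implicit bound; this is precisely the source of the ``tighter'' bounds alluded to in the statement.

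First, I would subtract the exact integral relation \eqref{eqn:ODE-exact} from the explicit SDC update \eqref{eqn:EXP_p} to obtain the error evolution
\[
e^{[p+1]}_n = e^{[p+1]}_{n-1} + h_n\left[f(\eta_{n-1}^{[p+1]}) - f(\eta_{n-1}^{[p]})\right] + \int_{t_{n-1}}^{t_n} \left(I[f({\boldsymbol \eta}^{[p]})](t) - f(y(t))\right) dt.
\]
Taking absolute values, applying Lipschitz continuity to the middle term (after inserting and subtracting $y_{n-1}$ to split $|\eta_{n-1}^{[p+1]} - \eta_{n-1}^{[p]}| \leq |e_{n-1}^{[p+1]}| + |e_{n-1}^{[p]}|$), using $h_n \leq h$, invoking Lemma \ref{lem:lipshitz-integration} on the integral, and bounding $|e_{n-1}^{[p]}| \leq \norm{{\boldsymbol e}^{[p]}}$, I would arrive at the one-step recursion
\[
|e^{[p+1]}_n| \leq (1 + hL)\,|e^{[p+1]}_{n-1}| + hL(1 + W_n)\norm{{\boldsymbol e}^{[p]}} + \frac{F}{M!}\, h^{M+1}.
\]

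Setting $W := L \max_{1 \leq n \leq N}(1 + W_n)$, this recursion has the form $|a_n| \leq A|a_{n-1}| + B$ with $A = 1 + hL > 1$ and $B = hW\norm{{\boldsymbol e}^{[p]}} + (F/M!)h^{M+1}$, which is directly amenable to Corollary \ref{cor:geo-estimate}. Applying that corollary together with $n \leq N$ and the standard inequality $(1 + hL)^N \leq e^{NhL}$ then yields the claimed bound with $C_1 = N e^{NhL} W$ and $C_2 = N e^{NhL} F/M!$.

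The main obstacle here is essentially bookkeeping rather than any delicate estimate: one must correctly combine the previous-iterate term $|e_{n-1}^{[p]}|$ and the quadrature-induced $W_n\norm{{\boldsymbol e}^{[p]}}$ into a single $\norm{{\boldsymbol e}^{[p]}}$ factor so that the recursion has coefficients independent of $n$, and one must verify the hypothesis $A > 1$ for Corollary \ref{cor:geo-estimate} (which holds since $hL > 0$). Unlike the semi-implicit proof, there is no delicate geometric-series estimate of $1/(1-hL_I)$ and consequently no smallness hypothesis on $h$ is required; the reward for this simplification is the absence of the factor $2$ in the constants and the halved exponent in the leading $|e_0|$ term.
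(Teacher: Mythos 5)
Your proposal is correct and follows essentially the same route as the paper, which simply says to revisit the proof of Theorem \ref{thm:error-sisdc} with $L_I = 0$ so that the factor $1/(1-hL_I)$ becomes $1$ rather than being bounded by $2$; you have carried out that specialization in full detail, arriving at the same recursion and the same application of Corollary \ref{cor:geo-estimate}. The only remark worth noting is that Corollary \ref{cor:geo-estimate} requires $A = 1 + hL > 1$, which fails in the degenerate case $L = 0$, but that edge case is trivial and is ignored by the paper as well.
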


\begin{proof}
Revisit the proof of
Theorem \ref{thm:error-sisdc} and replace the error estimate for $1/(1-hL_I) \leq 2$ with 
1 instead of 2.
\end{proof}

\section{Convergence Proofs for Higher Order Base Solvers}
\label{sec:trap}

We now consider the spectral deferred correction method with the implicit trapezoidal rule as its base solver:
\begin{equation}
	\label{eqn:SDC_Trap}
    \eta_n^{[p+1]} = \eta_{n-1}^{[p+1]} + \frac{h_n}{2}
    \left[
        f(\eta_n^{[p+1]}) + f(\eta_{n-1}^{[p+1]})
       -f(\eta_n^{[p]  }) - f(\eta_{n-1}^{[p]  }) 
    \right] 
    + h \sum_{m=1}^M w_{n,m}f(\eta_m^{[p]}).
\end{equation}
What makes this method interesting is that it picks up a total of two orders of accuracy with each correction.
Note again, that in the absence of the terms that the factor $h_n/2$ multiplies, this method reduces to explicit Picard iteration, which
picks up a single additional order of accuracy with each correction.

The result we focus on is the impact of each correction step, in which case any provisional solution may be used.  
In order to retain large regions of absolute stability
reasonable methods include low-order implicit solvers such as
backward Euler, or the second-order implicit trapezoidal (Crank-Nicholson) rule:
\begin{equation}
    \label{eqn:IDC_TRAP}
    {\bf Trap.\, Rule:} \quad \eta^{[0]}_n = \eta^{[0]}_{n-1} + \frac{h_n}{2} \left( f(\eta^{[0]}_{n-1}) + f( \eta^{[0]}_n ) \right), \quad n=1,2,\ldots N.
\end{equation}
In this section, we examine the interplay between the integral over the entire time interval, and the addition of extra integral terms that allows this solver to pick up additional orders
of accuracy.

Let us define the exact value of the right hand side function as $f_n = f(y(t_n))$, the approximate value of the right hand side function
as $f_n^{[p]} = f(\eta^{[p]}_n)$, the local and global quadrature rules for integration over the subinterval $[t_{n-1}, t_n]$ as
\begin{align*}
	T_n &= \frac{h_n}{2} \left[f_{n-1}+f_{n}\right],	\qquad T_n^{[p]} = \frac{h_n}{2} \left[f^{[p]}_{n-1}+f^{[p]}_{n}\right], \\
	H_n &= h \sum_{m=1}^M \omega_{n,m}f_m,	\qquad H_n^{[p]} = h \sum_{m=1}^M \omega_{n,m}f^{[p]}_m.
\end{align*}
These definitions allow us to compactly write the SDC method with the Trapezoidal rule defined in \eqref{eqn:SDC_Trap} to read
\begin{equation}
\label{eqn:SDC_Trap-integral}
	\eta_{n}^{[p+1]} = \eta_{n-1}^{[p+1]} + \left( T_n^{[p+1]} - T_n^{[p]} \right) + H_n^{[p]}.
\end{equation}
Recall that the exact solution satisfies the integral equation \eqref{eqn:ODE-exact}, which we repeat:
\begin{equation*}
    y_n = y_{n-1} + \int_{t_{n-1}}^{t_{n}} f( y(t) )\, dt.
\end{equation*}


\begin{theorem} 
\label{thm:error-trap}
When coupled with the implicit Trapezoidal rule, the errors for a single step of the Spectral Deferred Correction method satisfy
\begin{equation}
\label{eqn:theorem-trap-rule}
        |e^{[p+1]}_n| \leq e^{ 2 N h L } |e_{0}| + 2 N e^{ 2 (N-1) h L } \left( \frac{1}{12} \norm{ \frac{d^2 E^{[p]}}{dt^2} ( \cdot ) } h^3 + \frac{F}{M!} h^{M+1} \right),
\end{equation}
provided $h L < 1$, 
$N$ is the number of intervals under consideration, $M$ is the number of points involved, $L$ is the Lipschitz
constant of $f$, $F$ is an upper bound for the $M^{th}$ derivative of $f$, and the function $E^{[p]}(t)$ is the polynomial interpolant for the error in the the approximation 
of the right hand side function during the $p^{th}$ iterate defined by
\begin{equation}
\label{eqn:error-glob-interp}
	E^{[p]}(t) := I[f({\boldsymbol \eta^{[p]} })](t) - I[f( {\boldsymbol y} ) ]( t ) = \sum_{m=1}^M \df_m^{[p]} \ell_m( t / h ),
\end{equation}
where $\df^{[p]}_m := f^{[p]}_m - f_m$ for each $m = 1, 2, \dots M$.
The norm defined in \eqref{eqn:theorem-trap-rule} is the maximum absolute value of the second derivative of $E^{[p]}$:
\begin{equation}
	\norm{ \frac{d^2 E^{[p]}}{dt^2} ( \cdot ) } := \max_{t\in [0,h]} | ( E^{[p]} )''( t ) |.
\end{equation}

\end{theorem}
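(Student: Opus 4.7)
The plan is to subtract the exact integral equation \eqref{eqn:ODE-exact} from the compact SDC update \eqref{eqn:SDC_Trap-integral}, and then split the resulting residual into three pieces that map onto the three summands on the right-hand side of \eqref{eqn:theorem-trap-rule}. Writing $e^{[p+1]}_n = \eta^{[p+1]}_n - y_n$ and letting $T_n,H_n$ denote the trapezoidal and $M$-point quadratures applied to the exact right-hand side values $f_m = f(y(t_m))$, I obtain
\begin{equation*}
e^{[p+1]}_n = e^{[p+1]}_{n-1} + \bigl(T^{[p+1]}_n - T_n\bigr) \;-\; \bigl[(T^{[p]}_n - T_n) - (H^{[p]}_n - H_n)\bigr] \;+\; \Bigl(H_n - \int_{t_{n-1}}^{t_n} f(y(t))\,dt\Bigr).
\end{equation*}

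I would then estimate each non-trivial piece in turn. The first difference, $T^{[p+1]}_n - T_n = \frac{h_n}{2}[(f^{[p+1]}_{n-1}-f_{n-1}) + (f^{[p+1]}_n - f_n)]$, is controlled via Lipschitz continuity of $f$ by $\frac{hL}{2}(|e^{[p+1]}_{n-1}| + |e^{[p+1]}_n|)$; this supplies the implicit recursion on the current iterate. The last piece is the pure Lagrange-quadrature error applied to the exact integrand, which the Lagrange remainder argument used inside Lemma \ref{lem:lipshitz-integration} already bounds by $\frac{F}{M!}h^{M+1}$. The crucial observation --- the step responsible for two orders of accuracy per correction rather than the usual one --- is that the middle bracket is exactly the trapezoidal quadrature error for the polynomial $E^{[p]}$ defined in \eqref{eqn:error-glob-interp}. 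Indeed, by the definition of $\df^{[p]}_m$, $T^{[p]}_n - T_n = \frac{h_n}{2}\bigl(E^{[p]}(t_{n-1}) + E^{[p]}(t_n)\bigr)$, while $H^{[p]}_n - H_n = \int_{t_{n-1}}^{t_n} E^{[p]}(t)\,dt$ because the $M$-point rule integrates polynomials of degree at most $M-1$ exactly and $E^{[p]}$ is such a polynomial. The classical trapezoidal error formula then bounds this bracket by $\tfrac{h_n^3}{12}\|(E^{[p]})''\|_\infty \leq \tfrac{h^3}{12}\|(E^{[p]})''\|_\infty$.

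Collecting these bounds, bringing the $\tfrac{hL}{2}|e^{[p+1]}_n|$ term to the left, and dividing by $1 - hL/2 > 1/2$ (valid since $hL<1$ forces $hL/2 < 1/2$), I arrive at a recursion $|e^{[p+1]}_n| \leq A\,|e^{[p+1]}_{n-1}| + B$ with $A = (1+hL/2)/(1-hL/2)$ and $B = \bigl(\tfrac{h^3}{12}\|(E^{[p]})''\|_\infty + \tfrac{F}{M!}h^{M+1}\bigr)/(1-hL/2)$. A short calculation ($1 + \tfrac{2x}{1-x} \leq 1+4x \leq e^{4x}$ for $x\in[0,1/2]$) gives $A \leq e^{2hL}$, and $1/(1-hL/2) \leq 2$ on the same range. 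Invoking Corollary \ref{cor:geo-estimate} with $A>1$ and telescoping from $n=1$ up to $n\leq N$ then yields \eqref{eqn:theorem-trap-rule} exactly.

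I expect the main obstacle to be recognizing and cleanly articulating the cancellation in the middle bracket. A naive Lipschitz estimate of $|T^{[p]}_n - T_n|$ would produce only an $\BigOh(h^2\,\|{\bf e}^{[p]}\|)$ contribution and cost one order of accuracy per sweep. The saving grace is that $E^{[p]}$ is itself the degree-$(M-1)$ Lagrange interpolant of the $\df^{[p]}_m$'s, so the same $M$-point rule that defines $H^{[p]}_n$ integrates it exactly; it is this polynomial exactness, interacting with the second-order trapezoidal-rule error formula, that upgrades the per-sweep gain from one order to two.
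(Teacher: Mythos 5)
Your proposal is correct and follows essentially the same route as the paper: the identical three-term decomposition of the error recursion, the same identification of the middle bracket as the trapezoidal quadrature error applied to the interpolating polynomial $E^{[p]}$ (bounded via the classical $-\tfrac{h_n^3}{12}(E^{[p]})''(\xi_n)$ remainder), and the same geometric-series estimates for $(1+hL/2)/(1-hL/2)\leq e^{2hL}$ followed by an appeal to Corollary \ref{cor:geo-estimate}. No gaps.
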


\begin{proof}
We subtract the exact solution defined in Eqn.\,\eqref{eqn:ODE-exact} from the SDC method based upon the trapezoidal rule defined in Eqn.\,\eqref{eqn:SDC_Trap-integral}
to end up with
\begin{equation}
\label{eqn:trap-error}
\begin{aligned}
	e_{n}^{[p+1]} &= e_{n-1}^{[p+1]} + \left( T_n^{[p+1]} - T_n^{[p]} \right) + H_n^{[p]} - \int_{t_{n-1}}^{t_{n}} f(y(t))dt \\ 
	&= e_{n-1}^{[p+1]} + T_n^{[p+1]}-T_n +T_n-T_n^{[p]} + H_n^{[p]}-H_n + H_n- \int_{t_{n-1}}^{t_{n}} f(y(t))dt \\ 
	&= e_{n-1}^{[p+1]} + \underbrace{\left(T_n^{[p+1]}-T_n\right)}_{\bf I} +
		\underbrace{ \left(H_n^{[p]}-T_n^{[p]}+T_n-H_n\right)}_{{\bf II}}  + 
		\underbrace{ I_n }_{\bf III}, 
\end{aligned}
\end{equation}
where $I_n := H_n- \int_{t_{n-1}}^{t_{n}} f(y(t))dt$ is the difference between the high-order (discrete) integral and the exact integral of the right hand side.

We now estimate each of the three terms to the right of $e^{[p+1]}_{n-1}$ in Eqn.\,\eqref{eqn:trap-error} separately, starting with the first term:
\begin{equation}
\label{eqn:error-I}
	\left| {\bf I} \right| = \frac{h_n}{2} \left| f^{[p+1]}_{n-1} + f^{[p+1]}_{n} - f_{n-1} - f_{n} \right| \leq \frac{L h_n}{2} \left( |e^{[p+1]}_n| + |e^{[p+1]}_{n-1}| \right),
\end{equation}
which follows from using the Lipshitz continuity of $f$.  The third term can be estimated by first recognizing that
\[
	H_n := h \sum_{m=1}^M \omega_{n,m}f_m = \int_{t_{n-1}}^{t_n} I[f( {\boldsymbol y} ) ]( t ) \, dt,
\]
and then using Eqn.\,\eqref{eqn:lagrange-error} (which requires assuming that
$f\circ y \in C^M$) in order to yield
\begin{equation}
\label{eqn:error-III}
\left| {\bf III} \right| = \left| H_n- \int_{t_{n-1}}^{t_{n}} f(y(t))dt \right| = |H_n - I_n | \leq \frac{F}{M!} h^{M+1},
\end{equation}
where $F$ is any number that satisfies $\norm{ \frac{d^M}{dt^M} \left( f\circ y \right) }_\infty \leq F$.

Finally, we address the second, and most interesting term on the right hand side of Eqn.\,\eqref{eqn:trap-error}.
The key observation comes from recognizing this term as the difference between a low-order (local) quadrature,
$T_n$, and a high-order (global) quadrature $H_n$.
Note that the exact integral of the polynomial interpolant $E^{[p]}$
over the subinterval $[t_{n-1}, t_n]$ is
\begin{equation}
	\int_{t_{n-1}}^{t_n} E^{[p]}(t)\, dt = H^{[p]}_n - H_n,
\end{equation}
and that
\begin{equation}
	T^{[p]}_n - T_n = \frac{h_n}{2} \left( E^{ [p] }( t_n ) + E^{ [p] }( t_{n-1} ) \right)
\end{equation}
is a low-order approximation to this integral.  By textbook results, we have
\begin{equation}
\label{eqn:lte-trap}
	{\bf II} = \left(H_n^{[p]} - H_n - T_n^{[p]}+T_n \right) = -\frac{h_n^3}{12} \frac{d^2 E^{[p]} }{ dt^2 }(\xi_n),
\end{equation}
where $\xi_n$ is some number between $t_{n-1}$ and $t_n$.  Together, this implies
\begin{equation}
\label{eqn:error-II}
	\left| {\bf II} \right| \leq \frac{h_n^3}{12} \norm{ \frac{d^2 E^{[p]}}{dt^2} ( \cdot ) }.
\end{equation}

All together, inserting equations \eqref{eqn:error-I}, \eqref{eqn:error-II}, and \eqref{eqn:error-III} into \eqref{eqn:trap-error}, we have
\begin{equation}
\begin{aligned}
	\left| e_{n}^{[p+1]} \right| &\leq \left| e_{n-1}^{[p+1]} \right| + \left| {\bf I} \right| + \left| {\bf II} \right| + \left| {\bf III} \right| \\
&\leq
	\left( 1 + \frac{L h_n}{2} \right) e^{[p+1]}_{n-1} + \frac{L h_n }{ 2 } e^{[p+1]}_{n} +
	\frac{h_n^3}{12} \norm{ \frac{d^2 E^{[p]}}{dt^2} ( \cdot ) } + 
	\frac{F}{M!} h^{M+1}.
%
\end{aligned}
\end{equation}
After replacing each $h_n \leq h$, rearranging, and assuming that $\frac{h L}{2} < 1$, 
we have
\begin{equation}
\label{eqn:trap-proof-a}
\left| e_n^{[p+1]} \right| \leq 
	\underbrace{ \frac{ 1 + h L / 2 }{ 1 - hL/2 } }_{\leq e^{2hL}} e_{n-1}^{[p+1]} + \underbrace{ \frac{1}{ 1 - hL/2} }_{\leq 2} \left( \frac{h^3}{12} \norm{ \frac{d^2 E^{[p]}}{dt^2} ( \cdot ) } + \frac{F}{M!} h^{M+1} \right).
\end{equation}

We now verify the two underscored inequalities involving the $1 \pm h L / 2$ terms asserted in \eqref{eqn:trap-proof-a}.  Identical to \eqref{eqn:geo-estimate}, we have
\begin{equation*}
	\frac{1}{1-hL/2} \leq 1 + 2 \left( h L / 2 \right) = 1 + h L,
\end{equation*}
after expanding the rational expression in terms of a geometric series, and assuming that $hL/ 2 < 1$ in order to retain convergence.
Because $1 + hL/2 \leq 1 + hL$, we have
\[
	\frac{ 1 + h L / 2 }{ 1 - hL/2 } \leq (1 + h L )^2 \leq e^{2 h L},
\]
which verifies the first of the two underscored inequalities.  For the second one, we need only assume that $h L < 1$, which yields $ 1 / (1 - hL/2) < 2 $.

All together, we have
\begin{equation}
\left| e_n^{[p+1]} \right| \leq e^{2hL} e_{n-1}^{[p+1]} + 2 \left( 
	\frac{h^3}{12} \norm{ \frac{d^2 E^{[p]}}{dt^2} ( \cdot ) } + 
	\frac{F}{M!} h^{M+1} \right),
\end{equation}
which yields the desired result after appealing to Corollary \ref{cor:geo-estimate} and using $n \leq N$.

\end{proof}

\begin{rmk}
The same decomposition for the error can be used for SDC coupled with Forward (or Backward) Euler.  
That is, identical to the decomposition found in \eqref{eqn:trap-error}, we can decompose the error as
\begin{subequations}
\label{eqn:euler-error-decomp}
\begin{align}
	e_{n}^{[p+1]} &= e_{n-1}^{[p+1]} + L_n^{[p+1]} - L_n^{[p]} + H_n^{[p]} - \int_{t_{n-1}}^{t_{n}} f(y(t))dt \\ 
	&= e_{n-1}^{[p+1]} + T_n^{[p+1]}-L_n +L_n-L_n^{[p]} + H_n^{[p]}-H_n + H_n- \int_{t_{n-1}}^{t_{n}} f(y(t))dt \\ 
	&= e_{n-1}^{[p+1]} + \underbrace{\left(L_n^{[p+1]}-L_n\right)}_{\bf I} +
		\underbrace{ \left(H_n^{[p]}-L_n^{[p]}+L_n-H_n\right)}_{\bf II}  + 
		\underbrace{ I_n }_{\bf III}, 
\end{align}
\end{subequations}
where $L_n$ and $L_n^{[p]}$ denote a ``low-order" integral of the right hand side, but this time we use
\begin{equation}
	L_n := h_{n} f_{n-1}, \quad L^{[p]}_n := h_n f^{[p]}_{n-1},
\end{equation}
for Forward Euler, or instead
\begin{equation}
	L_n := h_{n} f_{n}, \quad L^{[p]}_n := h_n f^{[p]}_{n}
\end{equation}
for Backward Euler.
The third term ${\bf III}$ is again $\BigOh( h^{M+1} )$, and the first term ${\bf I}$ can be bounded by a constant times $\left| e_n^{[p]} \right| + \left| e_{n-1}^{[p+1]} \right|$.
The lack of additional order pickup can be found by observing that the second source of error instead satisfies
\begin{equation}
\label{eqn:lte-fe}
	{\bf II} = \left(H_n^{[p]} - H_n - L_n^{[p]}+L_n \right) = -\frac{h_n^2}{2} \frac{d E^{[p]} }{ dt }(\xi_n),
\end{equation}
where $\xi_n$ is some number between $t_{n-1}$ and $t_n$.  In the following examples, we compare this term to that found from the Trapezoidal rule.
\end{rmk}

\subsection{Examples}

To illustrate the results of the Theorem presented in this section, we consider the linear test case
\begin{equation}
y'(t) = y(t), \quad t > 0, \quad y(0) =1,
\end{equation}
and examine the errors produced by the SDC method when coupled with a higher order base solver.  The order pickup for the SDC method can be found by examining the size of second source of error,
defined in Equations \eqref{eqn:lte-trap} and \eqref{eqn:lte-fe},
given by $h_n^3 / 12 \left( E^{[p]} \right)''(\xi_n)$ for the trapezoidal rule, and $h_n^2 / 2 \left( E^{[p]} \right)'(\xi_n)$ for the Forward (or Backward) Euler method.  
Note that the form and size of this error is identical for either the Forward or Backward Euler base solver.

In the following examples, we work out the size of this term for a few different case studies.  Taylor expansions are found by making use of the Maple software package.

\subsubsection{$M=3$ uniformly spaced points}

We first consider the results of using a total of $M=3$ equispaced quadrature points and look at the local {\bf Error} over the subinterval $[t_{n-1},t_n]$ produced by the second term
${\bf II}$ defined in Equations 
\eqref{eqn:lte-trap} and \eqref{eqn:lte-fe}.  
In the second set of columns in Table \ref{tab:correction-errors-a} look at the size of this term by writing out Taylor expansions for the first $p=0$ SDC iteration of a solver constructed
by taking a Forward Euler provisional solution for the first time step.  Note that in this case, each point satisfies $e^{[0]}_n = \BigOh( h^2 )$ for each $n$, because the local truncation error (LTE) for Euler's method
is second order accurate.  Therefore, the jump from $\BigOh( h^2 )$ to $\BigOh( h^4 )$ indicates that the Trapezoidal correction picks up an additional two orders of accuracy, whereas the jump from $\BigOh( h^2 )$ to $\BigOh( h^3 )$ picks up a single
additional order of accuracy for the Euler base solver, which is consistent with the theory.

\begin{table}[!ht]
\caption{Correction errors with $M=3$ uniformly spaced points.
\label{tab:correction-errors-a}}
{\footnotesize 
\begin{tabular}{|c|c|c|c|c|}
\hline
{\bf Interval} & \multicolumn{2}{|c|}{{\bf Trapezoidal Rule}} & \multicolumn{2}{c|}{{\bf Forward Euler}} \\ \hline \hline
	& {\bf Error}
	& $p=0$ 
	& {\bf Error }
	& $p=0$ \\
\hline \hline
$\left[0, \frac{h}{2}\right]$ & 
$\frac{h}{24} \left( e_1^{[p]} - 2e_2^{[p]} + e_3^{[p]} \right)$ & $\BigOh( h^4 )$ & 
$\frac{h}{24} \left( 7 e^{[p]}_1 - 8 e_2^{[p]} + e_3^{[p]} \right)$ & $\BigOh( h^3 )$ \\ \hline
$[\frac h 2, h]$ &
$\frac{h}{24} \left( e_1^{[p]} - 2 e_2^{[p]} +   e_3^{[p]} \right)$ & $\BigOh( h^4 )$ &
$\frac{h}{24} \left( e_1^{[p]} + 4 e_2^{[p]} - 5 e_3^{[p]} \right)$ & $\BigOh( h^3 )$ \\
\hline
\end{tabular}}
\end{table}

\subsubsection{$M=4$ uniformly spaced points}

We next consider the same problem with a total of $M=4$ equispaced quadrature points.  Again, we look at the errors for the Trapezoidal method compared to the Forward Euler method after first 
constructing a provisional solution with the Forward Euler method.  We again observe that the Trapezoidal rule improves the order of accuracy of the provisional solution by two factors, whereas Forward Euler (or likewise Backward Euler)
only improves the order by one.  
Results for these quadrature points are presented in Table \ref{tab:correction-errors-b}.


\begin{table}[!ht]
\caption{Correction errors with $M=4$ uniformly spaced points.
\label{tab:correction-errors-b}}
{\footnotesize 
\begin{tabular}{|c|c|c|c|c|}
\hline
{\bf Interval} & \multicolumn{2}{|c|}{{\bf Trapezoidal Rule}} & \multicolumn{2}{c|}{{\bf Forward Euler}} \\ \hline \hline
	& {\bf Error}
	& $p=0$ 
	& {\bf Error }
	& $p=0$ \\
\hline \hline
$\left[0, \frac{h}{3}\right]$ & 
$\frac{h}{72} \left(  3 e_1^{[p]} - 7 e_2^{[p]} + 5 e_3^{[p]} - e_4^{[p]} \right)$ 
& $\BigOh( h^4 )$ & 
$\frac{h}{72} \left( 15 e^{[p]}_1 - 19 e_2^{[p]} + 5 e_3^{[p]} - e_4^{[p]} \right)$ 
& $\BigOh( h^3 )$ \\ \hline
$[\frac h 3, \frac {2h}{3} ]$ &
$\frac{h}{72} \left( e_1^{[p]} - e_2^{[p]} - e_3^{[p]} + e_4^{[p]} \right)$ 
& $\BigOh( h^4 )$ &
$\frac{h}{72} \left( e_1^{[p]} + 11 e_2^{[p]} - 13 e_3^{[p]} + e_4^{[p]} \right)$ 
& $\BigOh( h^3 )$ \\
\hline
$[\frac{2 h}{3}, h ]$ &
$\frac{h}{72} \left( e_1^{[p]} - 5 e_2^{[p]} + 7 e_3^{[p]} - 3 e_4^{[p]} \right)$ 
& $\BigOh( h^4 )$ &
$\frac{h}{72} \left( e_1^{[p]} - 5 e_2^{[p]} - 5 e_3^{[p]} + 9 e_4^{[p]} \right)$ 
& $\BigOh( h^3 )$ \\
\hline
\end{tabular}}
\end{table}

\subsubsection{$M=4$ non-equispaced spaced points}

Finally, we consider a case with a total of $M=4$ non-equispaced points.  As an illustrative example, we consider the quadrature points $\xi_1 = 0, \xi_2 = 1/3, \xi_3 = 1/2$, and $\xi_4 = 1$.  
We find that the trapezoidal error only increases the order of the solver by one degree, which is consistent with the findings in \cite{ChriOngQiu09}, where the authors show that 
when the second-order Runge-Kutta method is used as a corrector, then the solver does not always pick up two orders of accuracy with each correction loop.
Results for this problem are presented in Table \ref{tab:correction-errors-c}.


\begin{table}[!ht]
\caption{Correction errors with $M=4$ non-equispaced points.  In this case, we find that both methods only pick up a single additional order of accuracy.
\label{tab:correction-errors-c}}
{\footnotesize 
\begin{tabular}{|c|c|c|c|c|}
\hline
{\bf Interval} & \multicolumn{2}{|c|}{{\bf Trapezoidal Rule}} & \multicolumn{2}{c|}{{\bf Forward Euler}} \\ \hline \hline
	& {\bf Error}
	& $p=0$ 
	& {\bf Error }
	& $p=0$ \\
\hline \hline
$\left[0, \frac{h}{3}\right]$ & 
$\frac{h}{162} \left(  8 e_1^{[p]} - 27 e_2^{[p]} + 20 e_3^{[p]} - e_4^{[p]} \right)$ 
& $\BigOh( h^3 )$ & 
$\frac{h}{162} \left( 35 e^{[p]}_1 - 54 e_2^{[p]} + 20 e_3^{[p]} - e_4^{[p]} \right)$ 
& $\BigOh( h^3 )$ \\ \hline
$[\frac h 3, \frac {h}{2} ]$ &
$\frac{h}{5184} \left( 14 e_1^{[p]} -  27 e_2^{[p]} +   8 e_3^{[p]} + 5 e_4^{[p]} \right)$ 
& $\BigOh( h^3 )$ & 
$\frac{h}{5184} \left( 14 e_1^{[p]} + 405 e_2^{[p]} - 424 e_3^{[p]} + 5 e_4^{[p]} \right)$ 
& $\BigOh( h^3 )$ \\
\hline
$[\frac{h}{2}, h ]$ &
$\frac{h}{192} \left( 10 e_1^{[p]} - 81 e_2^{[p]} + 88 e_3^{[p]} - 17 e_4^{[p]} \right)$ 
& $\BigOh( h^3 )$ & 
$\frac{h}{192} \left( 10 e_1^{[p]} - 81 e_2^{[p]} + 40 e_3^{[p]} + 31 e_4^{[p]} \right)$ 
& $\BigOh( h^3 )$ \\
\hline
\end{tabular}
}
\end{table}

\section{Numerical results}
\label{section:num-results}

The primary contribution of this work is to construct rigorous error estimates for classical SDC methods,
and therefore we only include a couple of numerical results.
An abundance of SDC examples applied to ordinary and partial differential equations can be found in the literature.
One of our key goals here is
to promulgate the fact that the primary source of high-order accuracy inherent in all SDC methods comes from its underlying Picard integral formulation, and not necessarily the ``base solver," and therefore we focus our results on nearby variations of classical SDC methods
and demonstrate how classical SDC methods can be extended to produce related high-order solvers.

First, we introduce a comparison 
of errors (and stability regions) for explicit SDC vs.\,Picard iteration, second we explore modifications of the constant in front of an implicit SDC method, and 
finally we compare semi-implicit SDC and modified semi-implicit SDC solvers.
For the sake of brevity the proposed modifications to SDC methods are not formally analyzed but straightforward extensions
of the theorems presented in this work can be constructed to present formal error bounds for these methods.
The numerical evidence presented here supports this claim.

\subsection{A comparison of explicit SDC and Picard iteration}

In this numerical example, we compare the errors and stability regions by applying the Picard
iterative method defined in \eqref{eqn:picard-explicit} to that of the explicit SDC
defined in \eqref{eqn:EXP_p}.  In order to present an equal comparison of these two solvers,
we consider identical initial guesses, or provisional solutions
${\boldsymbol \eta}^{[0]}$ based upon forward Euler time stepping and we work with uniform quadrature points for all of our test cases.

\subsubsection{Errors for a linear test case}

In Figure \ref{fig:lambda-conv}, we compare errors for the linear equation
\begin{equation}
\label{eqn:linear-lambda}
y' = \lambda y, \quad y(0) = 1
\end{equation}
at a final time of $T = 10$ with $\lambda = -2$ and $\lambda = -5$.
Other orders and values of $\lambda$ show similar results
where we observe slightly smaller error constants when using the Picard iterative method compared to the
equivalent SDC method.
This is consistent with the findings of Corollary \ref{cor:error-exp}, because the first error estimate 
\begin{equation*}
|e^{[p+1]}_n| \leq |e^{[p+1]}_{n-1}| + h_n
    \left| f(\eta_{n-1}^{[p+1]})-f(\eta_{n-1}^{[p]})\right| + |I_n|
\end{equation*}
could be tightened up to read
\begin{equation*}
|e^{[p+1]}_n|\leq |e^{[p+1]}_{n-1}| + |I_n|
\end{equation*}
which produces a smaller (provable) overall error for the Picard method when compared to the SDC method.

\begin{figure}
\begin{center}
\begin{tikzpicture}[scale=0.7]
\begin{loglogaxis}[title={$\lambda=-2$},
	ymin=1e-15,ymax=1e1,xlabel={$h$},ylabel={Error $e_h$},
	grid=major,
	legend style={at={(0.98,0.02)},
	anchor=south east,font=\footnotesize,
	rounded corners=2pt}]
\addplot [mark=ball, color=blue, style=thick] table[x index = 0, y index = 1] {convergence_explicit_sdc_integrator_lam_2_order_3.dat};
\addplot [mark=square, color=red, style=thick] table[x index = 0, y index = 1] {convergence_explicit_sdc_integrator_lam_2_order_4.dat};
\addplot [mark=x, color=brown, style=thick] table[x index = 0, y index = 1] {convergence_explicit_sdc_integrator_lam_2_order_5.dat};
\addplot [mark=star, color=black, style=thick] table[x index = 0, y index = 1] {convergence_explicit_sdc_integrator_lam_2_order_6.dat};
\addplot+[dotted, mark=ball] table[x index = 0, y index = 1] {convergence_explicit_picard_iteration_lam_2_order_3.dat};
\addplot+[dotted, mark=square] table[x index = 0, y index = 1] {convergence_explicit_picard_iteration_lam_2_order_4.dat};
\addplot+[dotted, mark=x] table[x index = 0, y index = 1] {convergence_explicit_picard_iteration_lam_2_order_5.dat};
\addplot+[dotted, mark=star] table[x index = 0, y index = 1] {convergence_explicit_picard_iteration_lam_2_order_6.dat};
\legend{$M=3$, $M=4$, $M=5$, $M=6$ }
\end{loglogaxis}
\end{tikzpicture}
\begin{tikzpicture}[scale=0.7]
\begin{loglogaxis}[title={$\lambda=-5$},ymin=1e-15,ymax=1e1,xlabel={$h$},ylabel={Error $e_h$},grid=major,legend style={at={(0.98,0.02)},anchor=south east,font=\footnotesize,rounded corners=2pt}]
\addplot [mark=ball, color=blue, style=thick] table[x index = 0, y index = 1] {convergence_explicit_sdc_integrator_lam_5_order_3.dat};
\addplot [mark=square, color=red, style=thick] table[x index = 0, y index = 1] {convergence_explicit_sdc_integrator_lam_5_order_4.dat};
\addplot [mark=x, color=brown, style=thick] table[x index = 0, y index = 1] {convergence_explicit_sdc_integrator_lam_5_order_5.dat};
\addplot [mark=star, color=black, style=thick] table[x index = 0, y index = 1] {convergence_explicit_sdc_integrator_lam_5_order_6.dat};
\addplot+[dotted, mark=ball] table[x index = 0, y index = 1] {convergence_explicit_picard_iteration_lam_5_order_3.dat};
\addplot+[dotted, mark=square] table[x index = 0, y index = 1] {convergence_explicit_picard_iteration_lam_5_order_4.dat};
\addplot+[dotted, mark=x] table[x index = 0, y index = 1] {convergence_explicit_picard_iteration_lam_5_order_5.dat};
\addplot+[dotted, mark=star] table[x index = 0, y index = 1] {convergence_explicit_picard_iteration_lam_5_order_6.dat};
\legend{$M=3$, $M=4$, $M=5$, $M=6$ }
\end{loglogaxis}
\end{tikzpicture}
\caption{Linear test case.  Here, we compare explicit SDC (solid lines) with that of Picard iteration (dashed lines) of various orders.
Each method attains the desired order of accuracy with the minimum number of corrections.
In each case, Picard iteration has slightly smaller errors when compared the equivalent explicit SDC method of the same order and same quadrature rule.
\label{fig:lambda-conv}}
\end{center}
\end{figure}
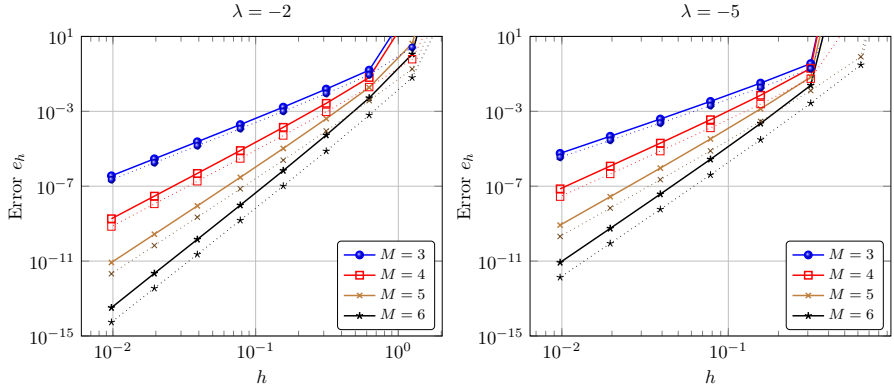

\subsubsection{A comparison of regions of absolute stability for explicit methods}

Next, we seek to compare regions of absolute stability for explicit SDC methods and their Picard iterative cousins.
Here we observe that the stability regions are slightly improved when the 
``Euler term" in the time stepping is dropped from the SDC method.  
That is to say, we find that 
the Picard iterative methods generally have larger regions of absolute stability when compared
to their SDC counterparts.

In order to demonstrate this, in Figure \ref{fig:stability-plots}, we include a comparison of plots of the regions of absolute stability, defined by
\begin{equation}
\mathbb{D} := \left\{
z \in \mathbb{C} : |\rho(z)| < 1
\right\}
\end{equation}
where $\rho(z)$ is the amplification factor for various quadrature rules for both of these methods, $z := \lambda h$,
and $\lambda$ is defined as in Eqn.\,\eqref{eqn:linear-lambda}.
There, we compare methods of orders two through ten, all based on equispaced quadrature points, forward Euler time
stepping for the provisional solution, and the minimum number of corrections required to reach the desired order
of accuracy.  (For example, the third order method uses two corrections and the fifth order method uses four corrections.) 
Similar to most explicit Runge-Kutta methods, we find that the regions of absolute stability increase as the order is increased,
but there are also more function evaluations per time step.


\begin{figure}[!ht]
\begin{center}
    \begin{tabular}{cc}
        \includegraphics[width=0.3\textwidth]{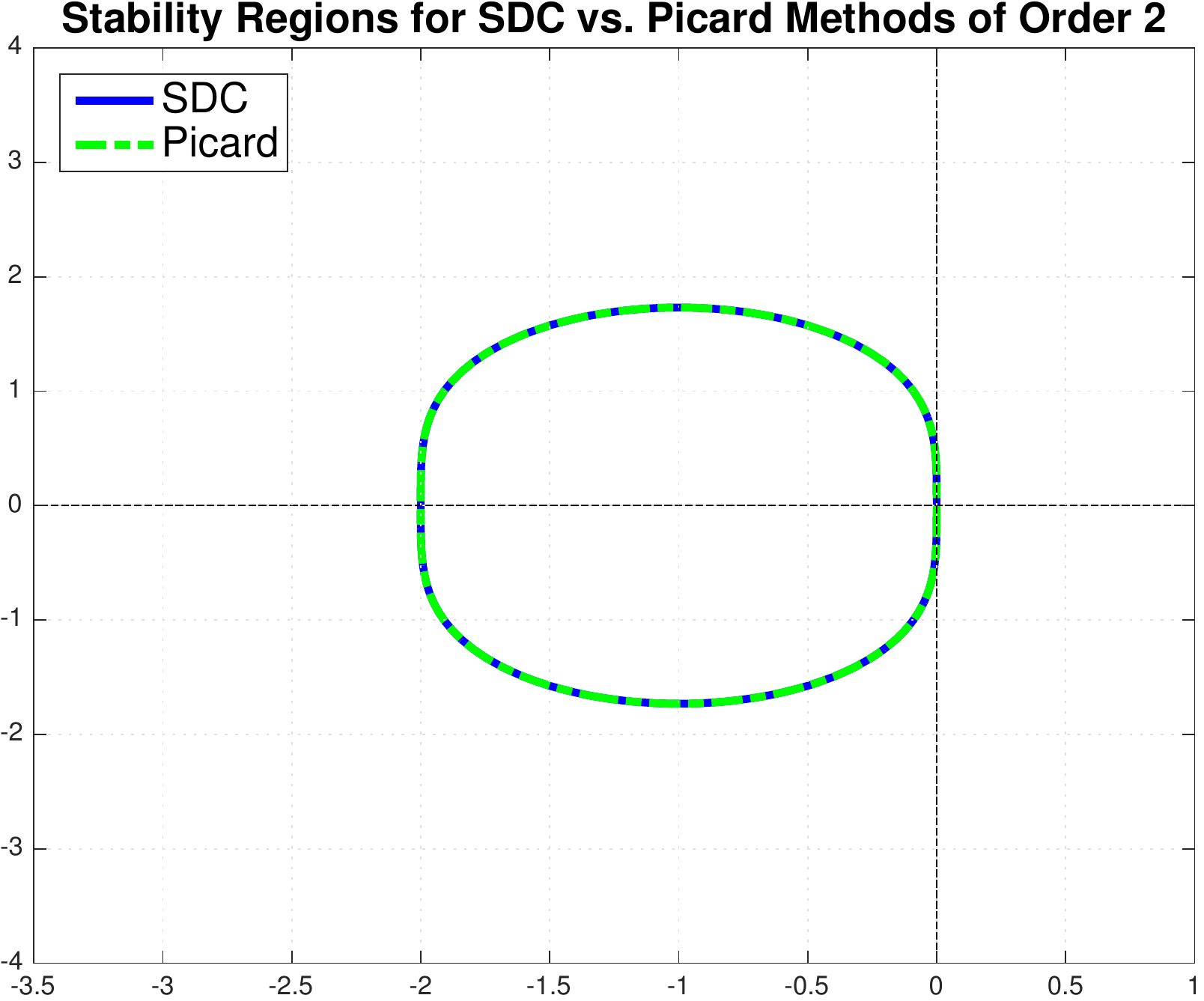}
        \includegraphics[width=0.3\textwidth]{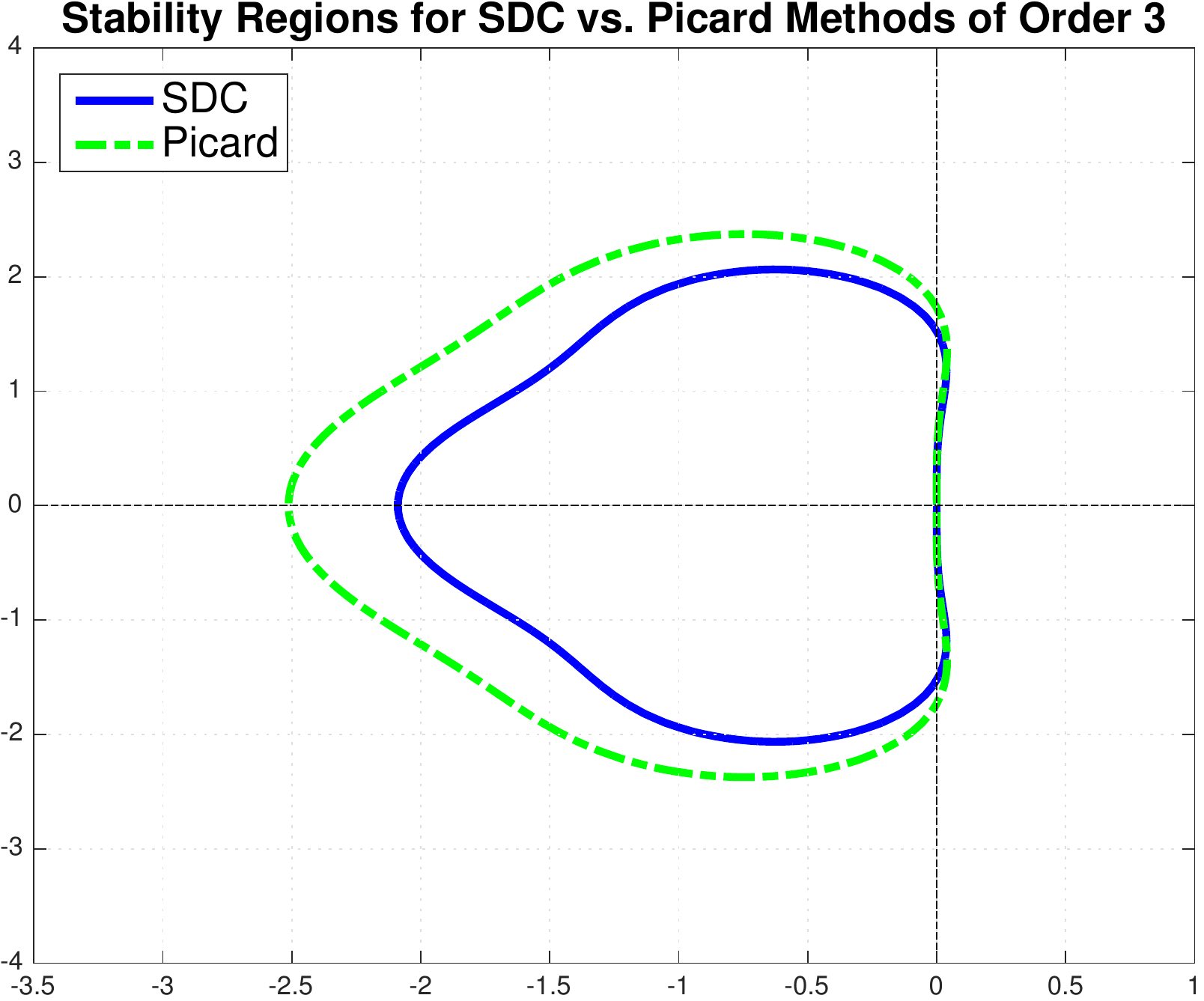}
        \includegraphics[width=0.3\textwidth]{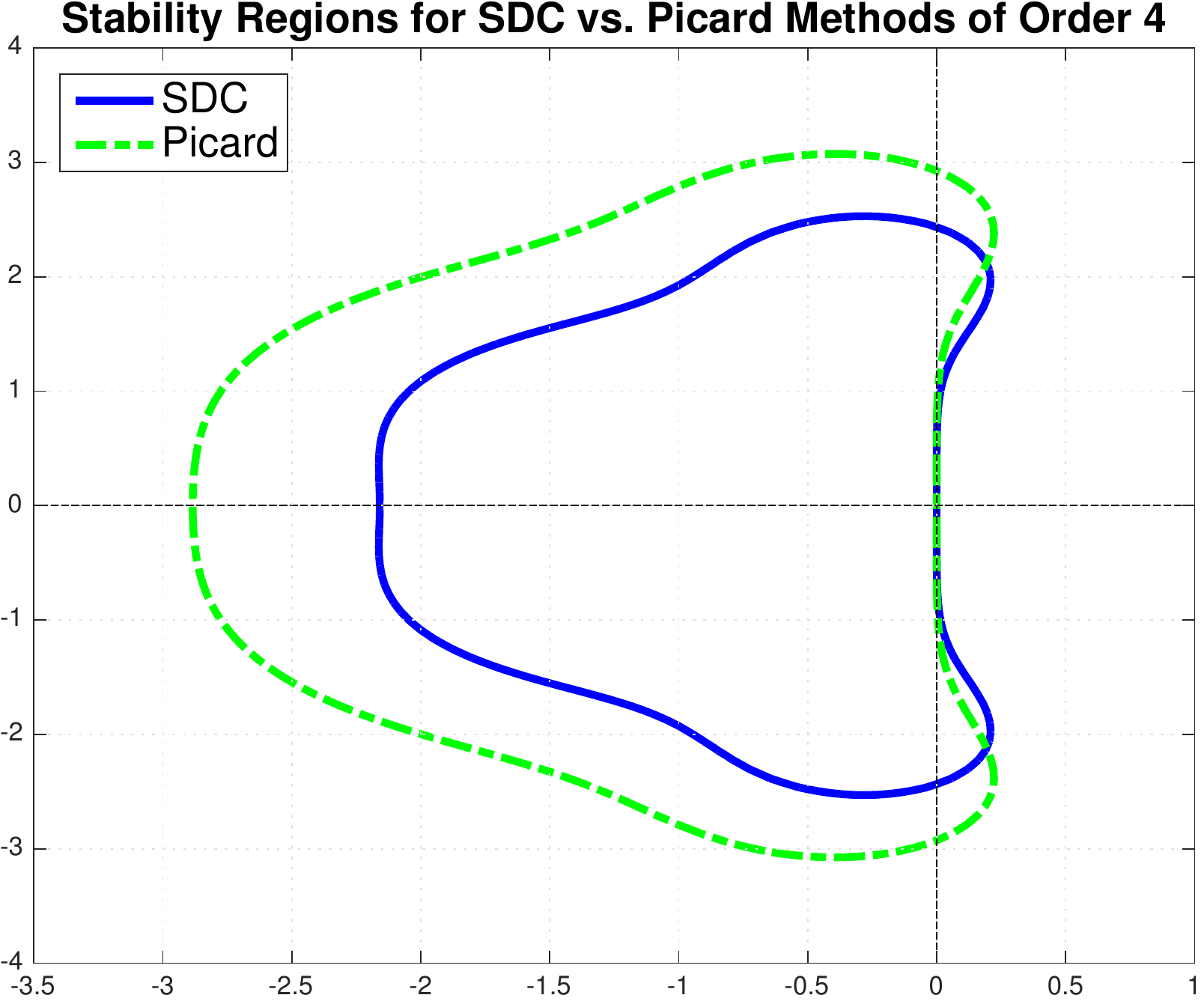} \\
        \includegraphics[width=0.3\textwidth]{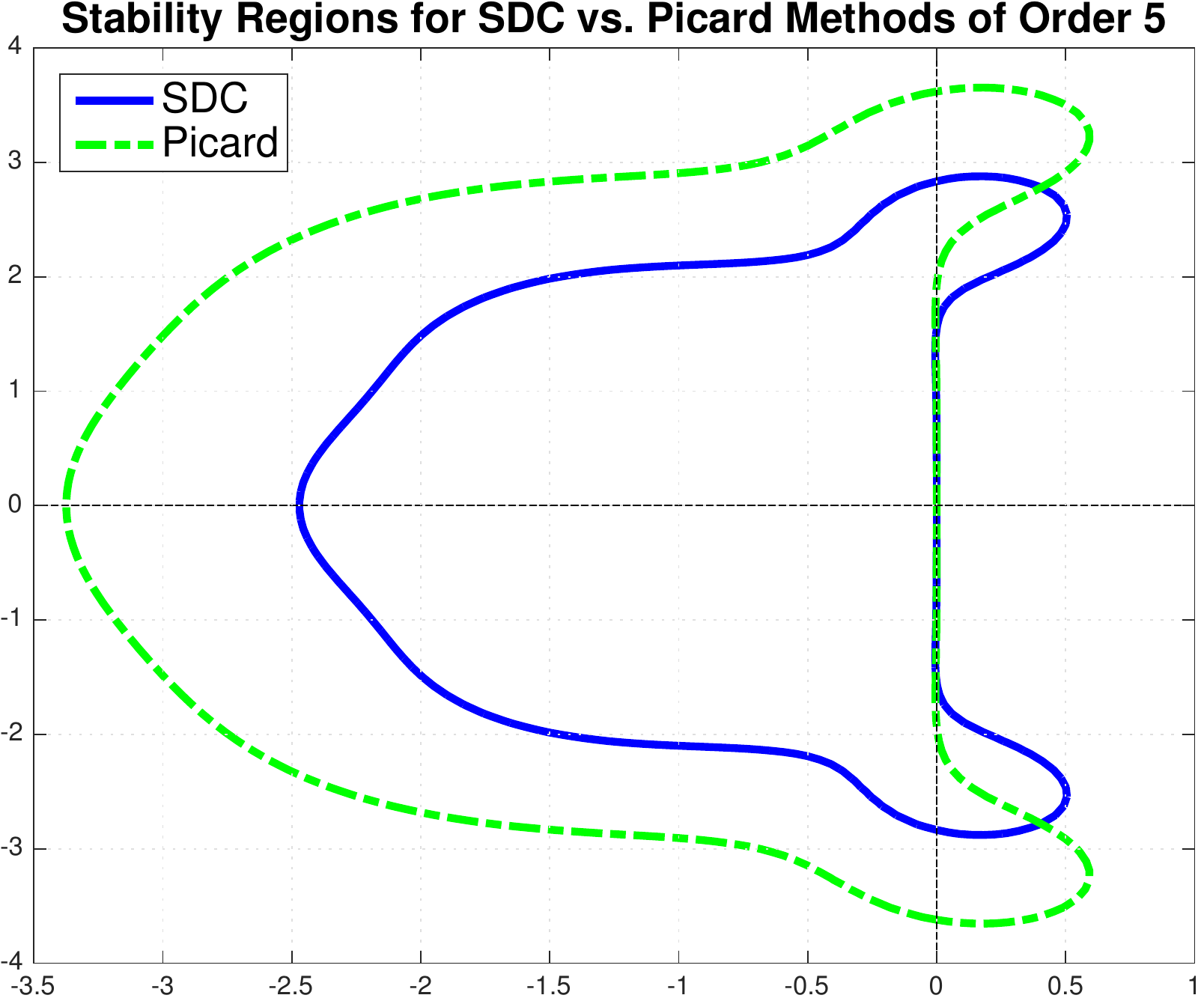}
        \includegraphics[width=0.3\textwidth]{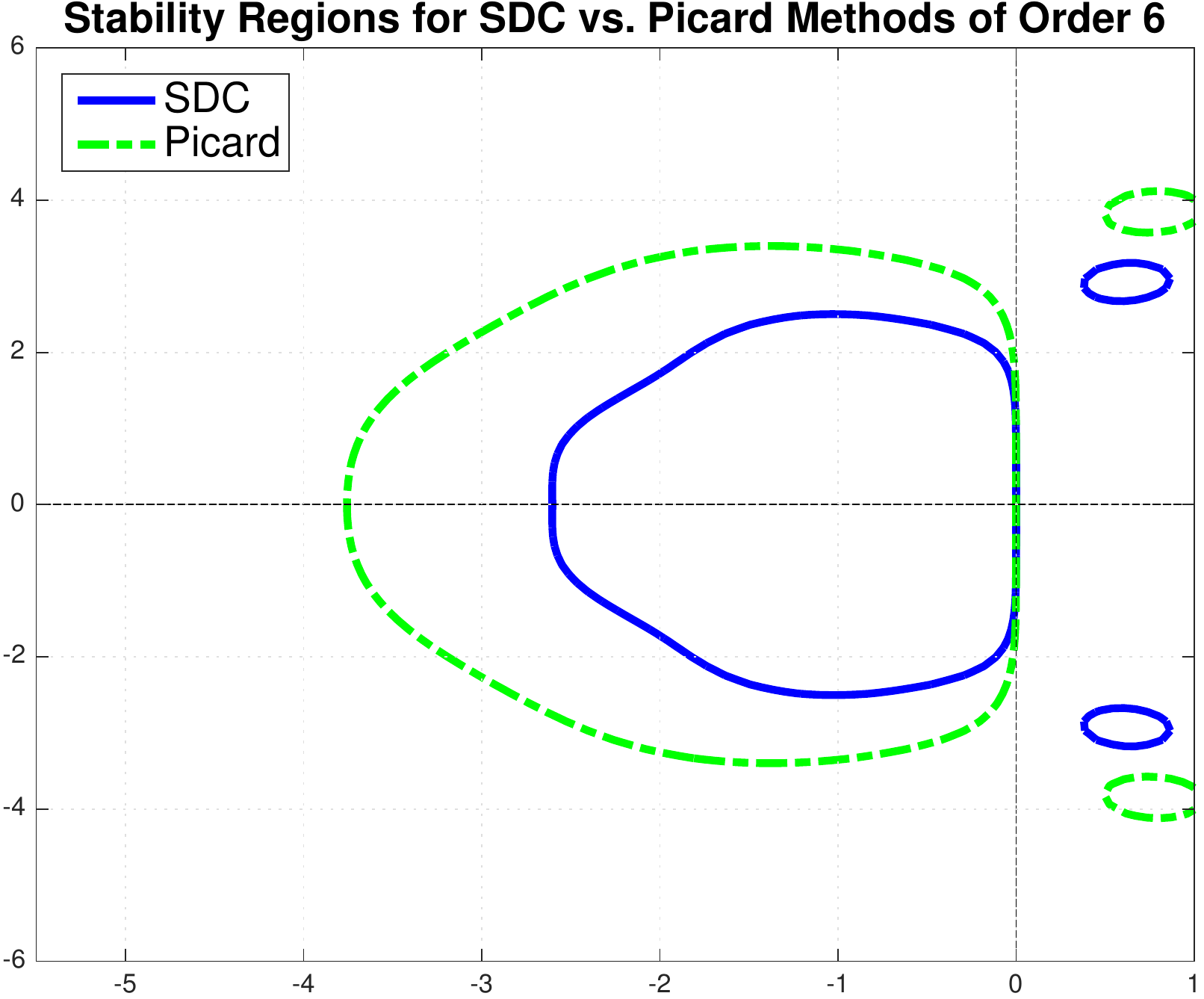}
        \includegraphics[width=0.3\textwidth]{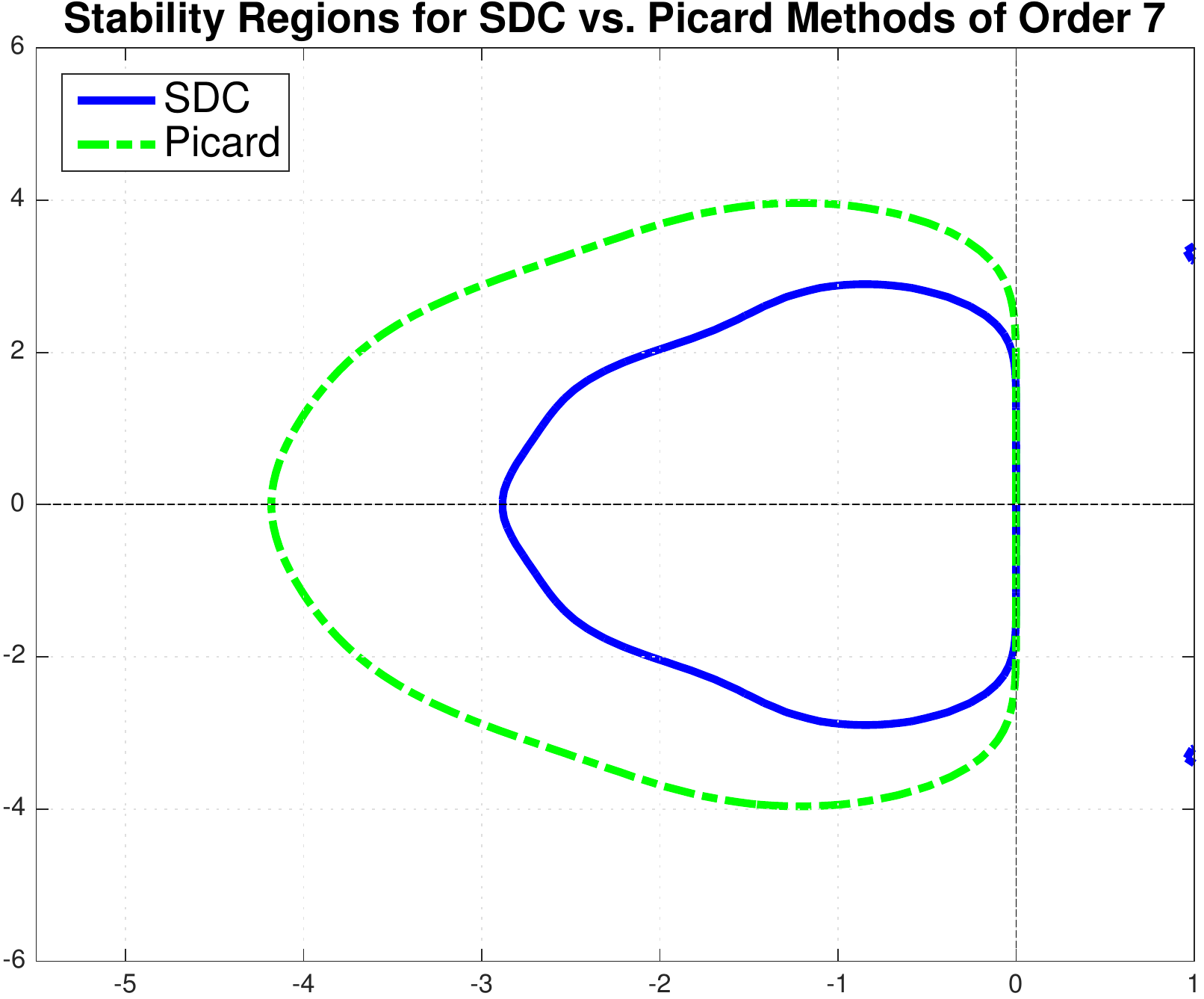} \\
        \includegraphics[width=0.3\textwidth]{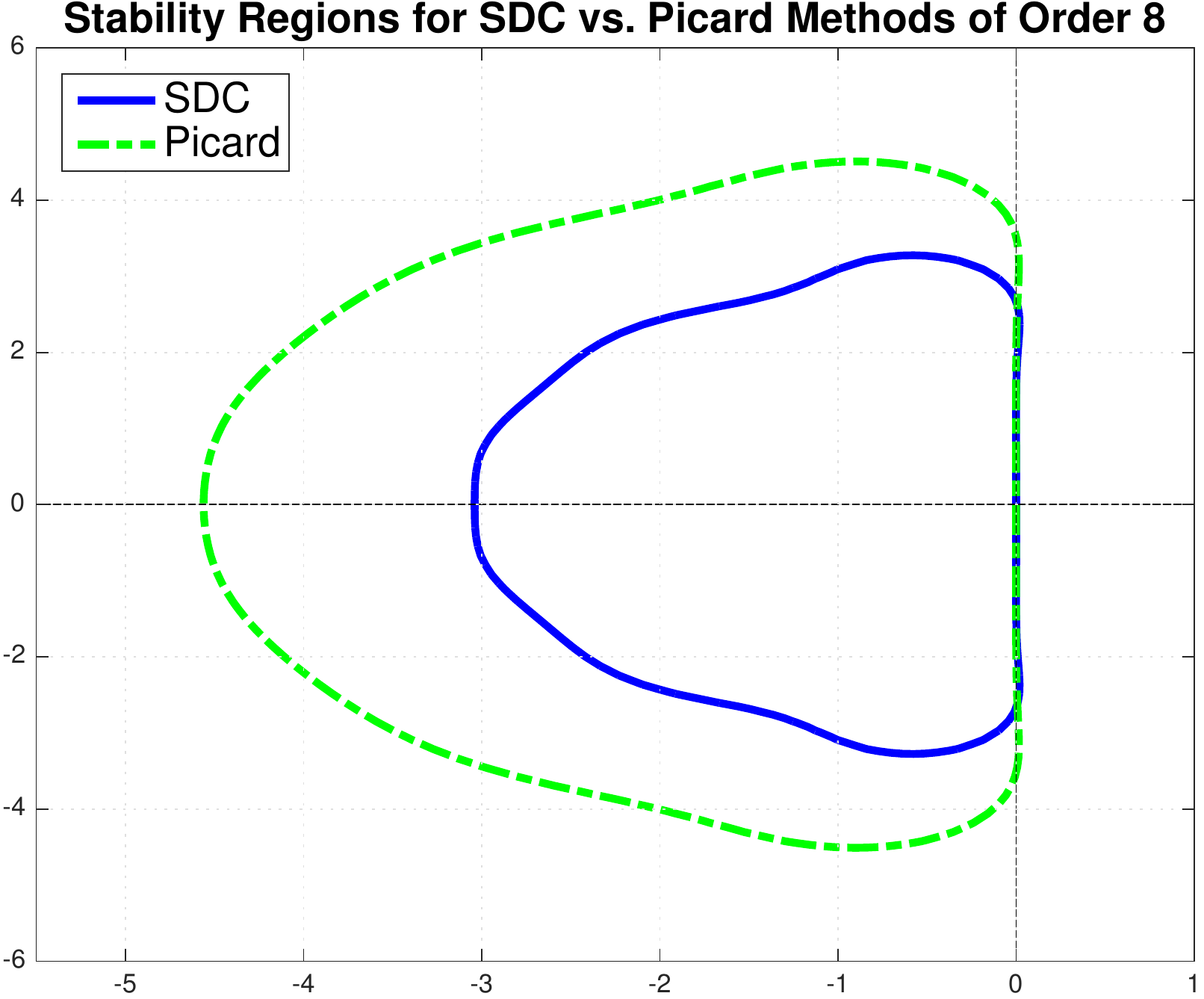}
        \includegraphics[width=0.3\textwidth]{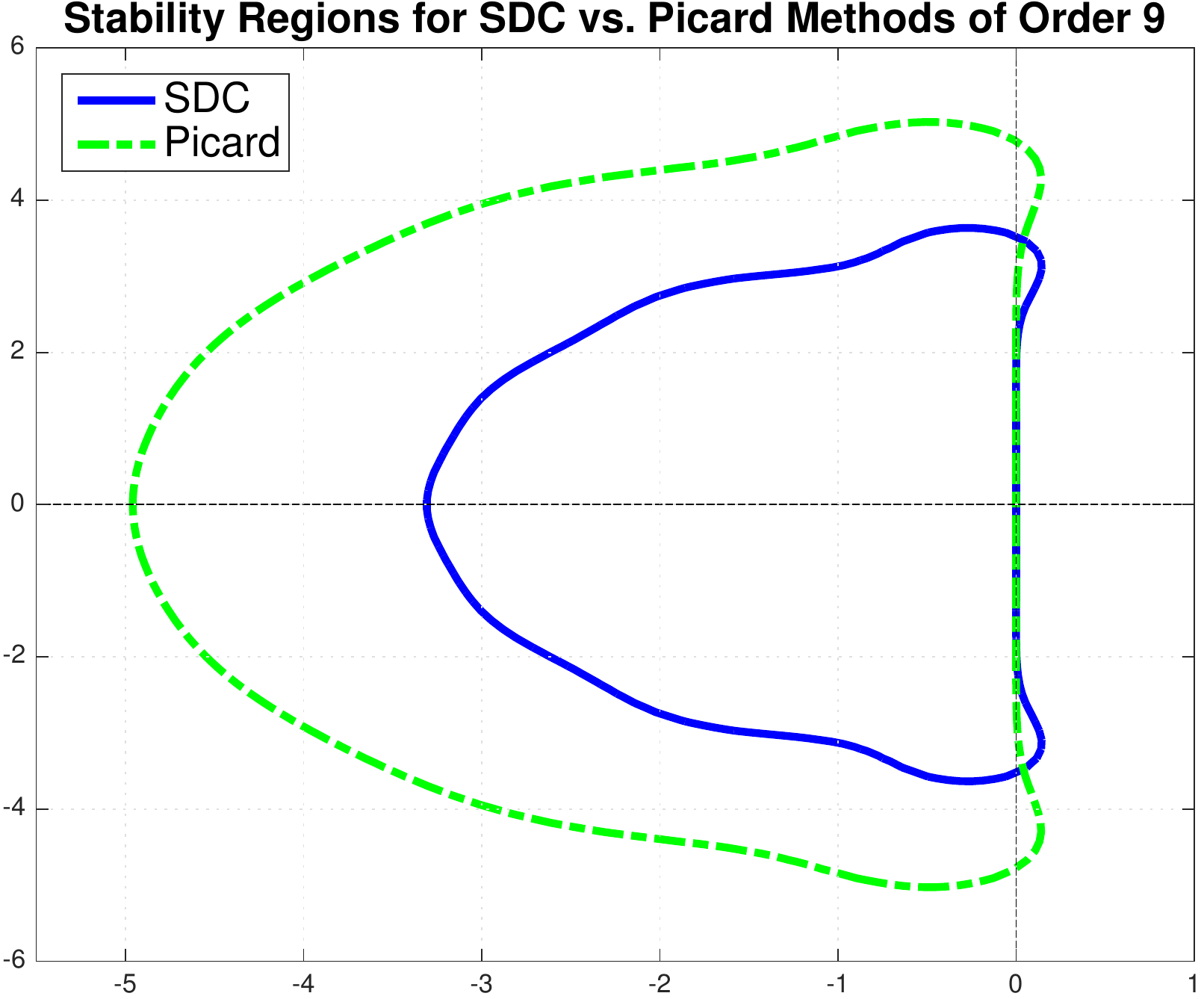}
        \includegraphics[width=0.3\textwidth]{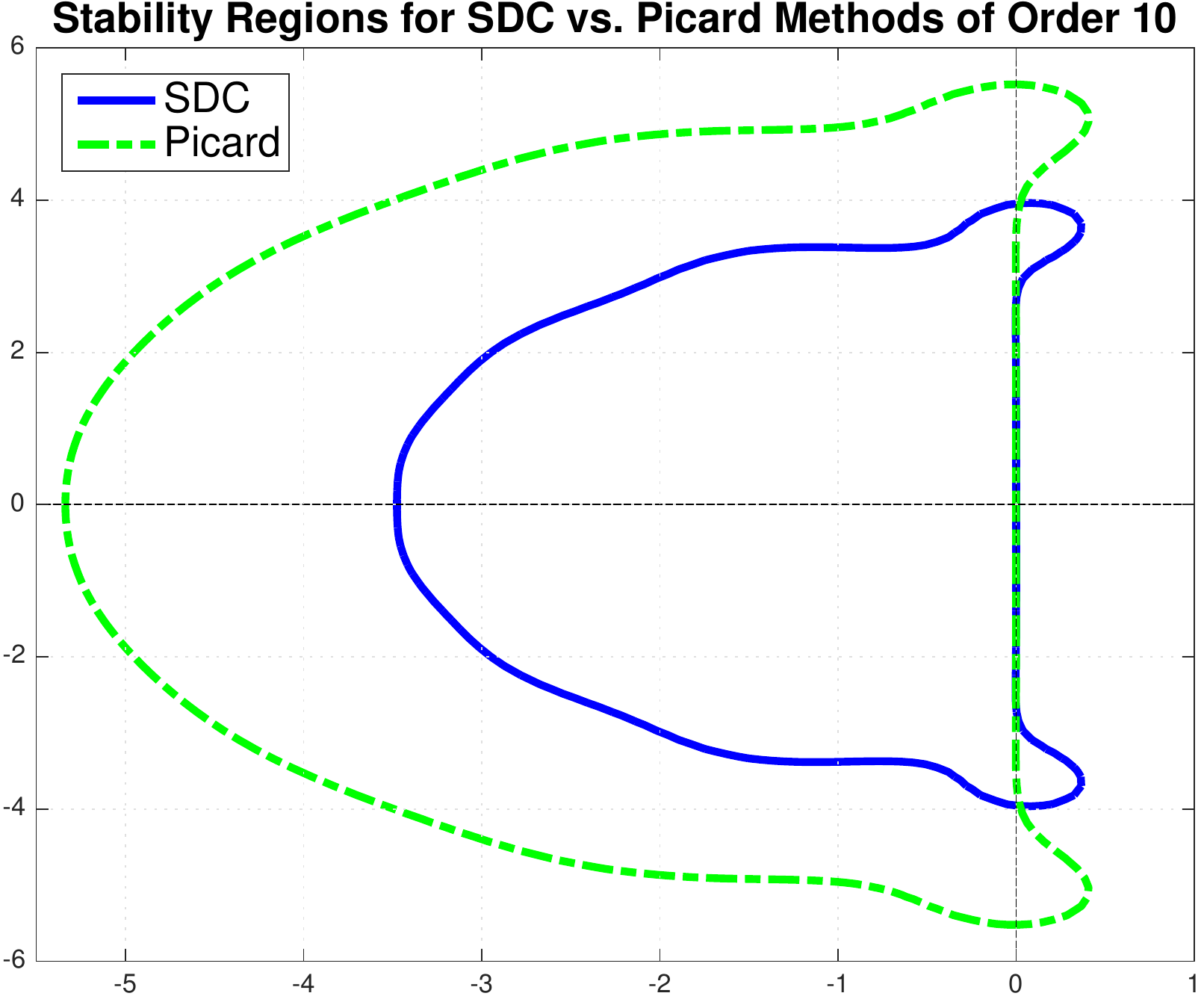} \\
    \end{tabular}
\caption{Stability regions for explicit methods.  Here, we compare SDC methods to that of Picard iterative methods where the explicit ``Euler term"
is dropped from the iterative process.  In each case save one, we observe that the Picard iterative methods have slightly larger regions of 
absolute stability.  The second-order SDC and Picard methods that use two quadrature points are identical because the forward Euler time step vanishes in the SDC method.
The scaling on the axes for the methods of orders six through ten is different than the scaling for the methods of orders two through five.
\label{fig:stability-plots}}
\end{center}
\end{figure}

\subsection{Implicit SDC methods with modified backward Euler time steps}

Here, we consider implicit SDC methods with a variable constant in front of the vanishing term:
\begin{equation}
\label{eqn:implicit-mod-be}
    \eta_n^{[p+1]} = \eta_{n-1}^{[p+1]} + \theta h
    \left[  f(\eta_{n}^{[p+1]})-f(\eta_{n}^{[p]})\right] + h \sum_{m=1}^M w_{n,m}f(\eta_m^{[p]}),
    \quad n=1,2,, \ldots N.
\end{equation}
This same scaling has already been explored in \cite{XiaXuShu07} for SDC methods, but there the authors only consider the case where $1/2 \leq \theta \leq 1$.
With $\theta = 0$, we have (explicit) Picard iteration (provided the provisional solution is modified), with $\theta = 1$, we have the classical implicit SDC method, and with 
negative values of $\theta$ we have backward Euler solves on negative time steps; none of the these changes effect the overall order of accuracy, only the
size of the error constant and the regions of absolute stability.  
Following the proof of the main Theorem in this work, we find the following result under a modified estimate for the time step size.

\begin{theorem} 
\label{thm:error-imp-modified}
The errors for a single step of the the modified implicit SDC method defined in \eqref{eqn:implicit-mod-be}
satisfy
\begin{equation}
\label{eqn:theorem-imp-mod}
        |e^{[p+1]}_n| \leq e^{2 N |\theta| h L} |e_{0}| + C_1 h \norm{ {\bf e}^{ [p] } } + C_2 h^{M+1}
\end{equation}
provided $|\theta| h < 1 / (2L)$.  
The constants
\[
C_1 = 2 N e^{2N|\theta|hL} \left( |\theta| + L \max_{n} W_n \right) \quad \text{and} \quad
C_2 = 2 N e^{2N|\theta|hL} \frac{F}{M!},
\]
again depend only on the smoothness of $f$, the exact solution $y$, the choice
of quadrature points, but this time they also depend on $\theta$.
\end{theorem}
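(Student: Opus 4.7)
The plan is to carry out essentially the same argument as the proof of Theorem \ref{thm:error-sisdc} (equivalently its implicit specialization in Corollary \ref{cor:error-imp}), with the coefficient $\theta h$ playing the role that $h_n$ played in the original implicit term and with $|\theta|$ tracked wherever sign issues appear. First I would subtract the exact integral identity \eqref{eqn:ODE-exact} from the scheme \eqref{eqn:implicit-mod-be} to obtain the error recursion
\begin{equation*}
e^{[p+1]}_n = e^{[p+1]}_{n-1} + \theta h \left[ f(\eta^{[p+1]}_n) - f(\eta^{[p]}_n) \right] + \int_{t_{n-1}}^{t_n} \left( I[f({\boldsymbol\eta}^{[p]})](t) - f(y(t)) \right) dt.
\end{equation*}
Taking absolute values, and inserting and subtracting $y_n$ inside the Lipschitz term as in \eqref{eqn:error-estimate-imp}, this is bounded by $|e^{[p+1]}_{n-1}| + |\theta|hL\bigl(|e^{[p+1]}_n|+|e^{[p]}_n|\bigr) + |I_n|$, while $|I_n|$ is controlled by Lemma \ref{lem:lipshitz-integration}, contributing $hLW_n\norm{{\boldsymbol e}^{[p]}} + (F/M!)h^{M+1}$.

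The algebraic heart of the argument is then to move the $|e^{[p+1]}_n|$ contribution to the left. The hypothesis $|\theta|h<1/(2L)$ ensures $|\theta|hL<1/2$, so the resulting divisor $1-|\theta|hL$ is positive, and the same two one-sided estimates exploited in \eqref{eqn:geo-estimate} apply:
\begin{equation*}
\frac{1}{1-|\theta|hL} \leq 1 + 2|\theta|hL \leq e^{2|\theta|hL}, \qquad \frac{1}{1-|\theta|hL} \leq 2.
\end{equation*}
Using the first estimate on the coefficient of $|e^{[p+1]}_{n-1}|$ and the second estimate on the remaining terms, I would obtain a linear recursion of the form $|e^{[p+1]}_n| \leq A|e^{[p+1]}_{n-1}| + B$ with $A = e^{2|\theta|hL}$ and $B$ collecting the contributions $2hL(|\theta|+W_n)\norm{{\boldsymbol e}^{[p]}}$ and $2(F/M!)h^{M+1}$. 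A direct application of Corollary \ref{cor:geo-estimate}, combined with $n\leq N$ in each of $A^n$ and $nA^{n-1}$, then yields the estimate \eqref{eqn:theorem-imp-mod} with the stated forms of $C_1$ and $C_2$.

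The only substantive departure from the implicit SDC proof is keeping $|\theta|$ consistently in place of $1$ in the geometric series and exponential-growth factors, since the theorem permits negative $\theta$. I do not anticipate any significant obstacle: every inequality used is a term-by-term analogue of one already appearing in the proof of Theorem \ref{thm:error-sisdc}, and no new regularity or structural hypothesis on $f$ is required beyond those already in force. The main bookkeeping point is simply that the admissible time step restriction must be phrased as $|\theta|h<1/(2L)$ so that $1-|\theta|hL$ remains bounded away from zero regardless of the sign of $\theta$.
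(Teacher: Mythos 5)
Your proposal is correct and is exactly the argument the paper intends: the paper gives no separate proof of Theorem \ref{thm:error-imp-modified}, stating only that it follows the proof of Theorem \ref{thm:error-sisdc} with a modified step-size restriction, and you fill in that outline faithfully by replacing $h_n$ with $\theta h$, tracking $|\theta|$ through the geometric-series bounds, and invoking Corollary \ref{cor:geo-estimate}. The only (harmless) mismatch is in the bookkeeping of $C_1$, where your derivation gives $2Ne^{2N|\theta|hL}L(|\theta|+\max_n W_n)$ rather than the paper's $2Ne^{2N|\theta|hL}(|\theta|+L\max_n W_n)$ --- a discrepancy inherited from the paper's own convention in defining $W$ in the main theorem.
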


Note that the value of $\theta = 0$ minimizes the size of these constants, but it also produces poor regions of absolute stability.
With $\theta \gg 1$ we have a method that is heavy handed
on multiple backward Euler solves, and therefore it has a very large
region of absolute stability, however these methods unfortunately introduce larger error constants.
%
%
Small values of $\theta$ decrease these error constants, but they modify the regions of absolute stability to the point
where they become finite and therefore undesirable as these are implicit methods.

\subsubsection{A verification of high-order accuracy: The nonlinear pendulum problem}

As a verification of the high-order accuracy of the solvers, we consider the equations of motion for a nonlinear pendulum:
\begin{equation}
\label{eqn:pendulum-2ndorder}
x''(t) + \sin( x(t) ) = 0
\end{equation}
with appropriate initial conditions.  If we perform the change of variables $y_1(t) = x(t)$ and $y_2(t) = x'(t)$, we end up with the following
first-order nonlinear system of equations that is equivalent to Eqn.\,\eqref{eqn:pendulum-2ndorder}:
\begin{equation}
\left( y_1, y_2 \right)' = \left( y_2, -\sin( y_1 ) \right).  
\end{equation}
We consider initial conditions defined by $(y_1(0), y_2(0)) = (0,1)$ and we integrate this problem to a final time of $T = 10$.
To compute a reference solution, we use MATLAB's builtin ode45 with a relative tolerance of $10^{-12}$ and an absolute tolerance
of $10^{-14}$.  

We present convergence results for this problem in Figure \ref{fig:imp-scalings}.  These results indicate that each method is indeed high-order independent of the value
of $\theta$.  For the sake of brevity, we only report results for methods with a total of $M=4$ equispaced quadrature points, but we also compare results for different number
of corrections.  (This underlying quadrature rule is also known as Simpson's $3/8$ rule, which has a smaller error constant
than Simpson's rule that uses $M=3$ equispaced points, but it comes at the cost of an additional function evaluation.)
We not only find that smaller values of $\theta$ produce smaller errors, which the theory supports, but we also demonstrate that more corrections 
for the methods with large $\theta$ values can help to decrease the errors.

    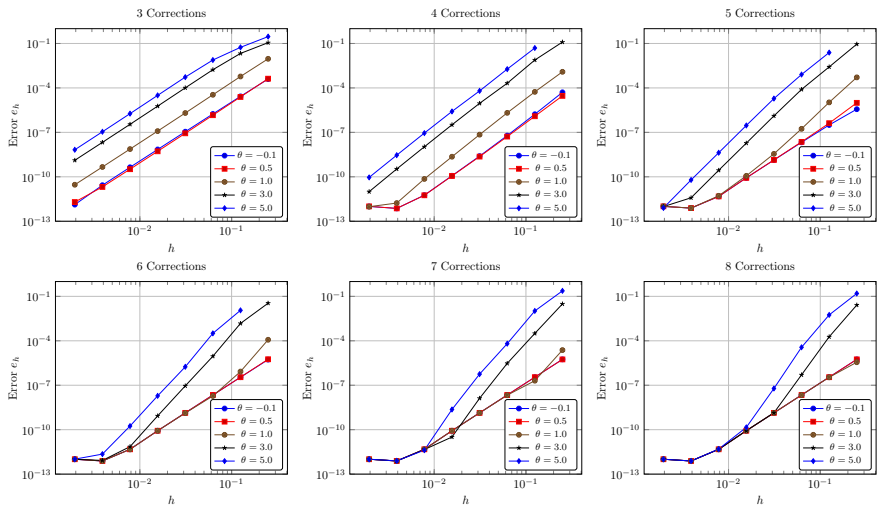
\begin{figure}[h]
    \begin{center}
    \begin{minipage}{.3\textwidth}
    \begin{tikzpicture}[scale=0.45]
    \begin{loglogaxis}[title=3 Corrections, ymin=1e-13,ymax=1e0,xlabel={$h$},ylabel={Error $e_h$},grid=major,legend style={at={(0.98,0.02)},anchor=south east,font=\footnotesize,rounded corners=2pt}]
    \addplot table[x index = 0, y index = 1] {convergence_implicit_sdc_integrator_theta_0.1.dat};
    \addplot table[x index = 0, y index = 1] {convergence_implicit_sdc_integrator_theta_0.5.dat};
    \addplot table[x index = 0, y index = 1] {convergence_implicit_sdc_integrator_theta_1.dat};
    \addplot table[x index = 0, y index = 1] {convergence_implicit_sdc_integrator_theta_3.dat};
    \addplot table[x index = 0, y index = 1] {convergence_implicit_sdc_integrator_theta_5.dat};
    \legend{$\theta=-0.1$, $\theta=0.5$, $\theta=1.0$,  $\theta=3.0$, $\theta=5.0$ }
    \end{loglogaxis}
    \end{tikzpicture}
    \end{minipage}
    \begin{minipage}{.3\textwidth}
    \begin{tikzpicture}[scale=0.45]
    \begin{loglogaxis}[title=4 Corrections, ymin=1e-13,ymax=1e0,xlabel={$h$},ylabel={Error $e_h$},grid=major,legend style={at={(0.98,0.02)},anchor=south east,font=\footnotesize,rounded corners=2pt}]
    \addplot table[x index = 0, y index = 1] {convergence_implicit_sdc_integrator_theta_0.1_ncor_4.dat};
    \addplot table[x index = 0, y index = 1] {convergence_implicit_sdc_integrator_theta_0.5_ncor_4.dat};
    \addplot table[x index = 0, y index = 1] {convergence_implicit_sdc_integrator_theta_1_ncor_4.dat};
    \addplot table[x index = 0, y index = 1] {convergence_implicit_sdc_integrator_theta_3_ncor_4.dat};
    \addplot table[x index = 0, y index = 1] {convergence_implicit_sdc_integrator_theta_5_ncor_4.dat};
    \legend{$\theta=-0.1$, $\theta=0.5$, $\theta=1.0$,  $\theta=3.0$, $\theta=5.0$ }
    \end{loglogaxis}
    \end{tikzpicture}
    \end{minipage}
    \begin{minipage}{0.3\textwidth}
    \begin{tikzpicture}[scale=0.45]
    \begin{loglogaxis}[title=5 Corrections, ymin=1e-13,ymax=1e0,xlabel={$h$},ylabel={Error $e_h$},grid=major,legend style={at={(0.98,0.02)},anchor=south east,font=\footnotesize,rounded corners=2pt}]
    \addplot table[x index = 0, y index = 1] {convergence_implicit_sdc_integrator_theta_0.1_ncor_5.dat};
    \addplot table[x index = 0, y index = 1] {convergence_implicit_sdc_integrator_theta_0.5_ncor_5.dat};
    \addplot table[x index = 0, y index = 1] {convergence_implicit_sdc_integrator_theta_1_ncor_5.dat};
    \addplot table[x index = 0, y index = 1] {convergence_implicit_sdc_integrator_theta_3_ncor_5.dat};
    \addplot table[x index = 0, y index = 1] {convergence_implicit_sdc_integrator_theta_5_ncor_5.dat};
    \legend{$\theta=-0.1$, $\theta=0.5$, $\theta=1.0$,  $\theta=3.0$, $\theta=5.0$ }
    \end{loglogaxis}
    \end{tikzpicture}
    \end{minipage} \\
    \begin{minipage}{0.3\textwidth}
    \begin{tikzpicture}[scale=0.45]
    \begin{loglogaxis}[title=6 Corrections, ymin=1e-13,ymax=1e0,xlabel={$h$},ylabel={Error $e_h$},grid=major,legend style={at={(0.98,0.02)},anchor=south east,font=\footnotesize,rounded corners=2pt}]
    \addplot table[x index = 0, y index = 1] {convergence_implicit_sdc_integrator_theta_0.1_ncor_6.dat};
    \addplot table[x index = 0, y index = 1] {convergence_implicit_sdc_integrator_theta_0.5_ncor_6.dat};
    \addplot table[x index = 0, y index = 1] {convergence_implicit_sdc_integrator_theta_1_ncor_6.dat};
    \addplot table[x index = 0, y index = 1] {convergence_implicit_sdc_integrator_theta_3_ncor_6.dat};
    \addplot table[x index = 0, y index = 1] {convergence_implicit_sdc_integrator_theta_5_ncor_6.dat};
    \legend{$\theta=-0.1$, $\theta=0.5$, $\theta=1.0$,  $\theta=3.0$, $\theta=5.0$ }
    \end{loglogaxis}
    \end{tikzpicture}
    \end{minipage}
    \begin{minipage}{0.3\textwidth}
    \begin{tikzpicture}[scale=0.45]
    \begin{loglogaxis}[title=7 Corrections, ymin=1e-13,ymax=1e0,xlabel={$h$},ylabel={Error $e_h$},grid=major,legend style={at={(0.98,0.02)},anchor=south east,font=\footnotesize,rounded corners=2pt}]
    \addplot table[x index = 0, y index = 1] {convergence_implicit_sdc_integrator_theta_0.1_ncor_7.dat};
    \addplot table[x index = 0, y index = 1] {convergence_implicit_sdc_integrator_theta_0.5_ncor_7.dat};
    \addplot table[x index = 0, y index = 1] {convergence_implicit_sdc_integrator_theta_1_ncor_7.dat};
    \addplot table[x index = 0, y index = 1] {convergence_implicit_sdc_integrator_theta_3_ncor_7.dat};
    \addplot table[x index = 0, y index = 1] {convergence_implicit_sdc_integrator_theta_5_ncor_7.dat};
    \legend{$\theta=-0.1$, $\theta=0.5$, $\theta=1.0$,  $\theta=3.0$, $\theta=5.0$ }
    \end{loglogaxis}
    \end{tikzpicture}
    \end{minipage}
    \begin{minipage}{0.3\textwidth}
    \begin{tikzpicture}[scale=0.45]
    \begin{loglogaxis}[title=8 Corrections, ymin=1e-13,ymax=1e0,xlabel={$h$},ylabel={Error $e_h$},grid=major,legend style={at={(0.98,0.02)},anchor=south east,font=\footnotesize,rounded corners=2pt}]
    \addplot table[x index = 0, y index = 1] {convergence_implicit_sdc_integrator_theta_0.1_ncor_8.dat};
    \addplot table[x index = 0, y index = 1] {convergence_implicit_sdc_integrator_theta_0.5_ncor_8.dat};
    \addplot table[x index = 0, y index = 1] {convergence_implicit_sdc_integrator_theta_1_ncor_8.dat};
    \addplot table[x index = 0, y index = 1] {convergence_implicit_sdc_integrator_theta_3_ncor_8.dat};
    \addplot table[x index = 0, y index = 1] {convergence_implicit_sdc_integrator_theta_5_ncor_8.dat};
    \legend{$\theta=-0.1$, $\theta=0.5$, $\theta=1.0$,  $\theta=3.0$, $\theta=5.0$ }
    \end{loglogaxis}
    \end{tikzpicture}
    \end{minipage}
    \end{center}
    \caption{Nonlinear pendulum problem.  Here, we compare SDC methods with different scalings on the backward Euler term
    as defined in Eqn.\,\eqref{eqn:implicit-mod-be}.  The case with $\theta = 1$ is classical implicit SDC.
    We observe the expected result that all methods have high-order accuracy and that $\theta > 1$ produces larger error constants.
    The case with $\theta = -0.1$ is not a useful method because it has a finite region of absolute stability.  
    While all methods here are fourth-order accurate after three corrections, the methods with large values of $\theta$ stand to gain the
    most through additional corrections.  This can be attributed to the large error constants found in the backward Euler term that vanish
    at the number of iterations increase (provided the iterates converge).
    \label{fig:imp-scalings}}
    \end{figure}

\subsubsection{A parameter study of regions of absolute stability for implicit methods}

Given the results of the previous section, it would be tempting to want to set $\theta = 0$ in order to reduce the total error.
What is missing from this observation is an understanding of the regions of absolute stability.  We now address this question.
We find that small values of $\theta$ produce finite regions of absolute stability, and that large values of $\theta$ increase
the regions of absolute stability (when compared to classical SDC methods) but they also increase the stiffness of each implicit solve.
With that being said, larger time steps should be able to be taken, but as is pointed out in the previous section, larger errors are introduced.
This reproduces the usual tradeoff between being able to take large time steps with large errors or being forced to take smaller time steps 
but at an increased computational cost.

In Figure \ref{fig:stability-plots-imp} we present results for a third order method with various values of $\theta$.  There we plot 
contour plots of the 
modulus of the amplification factor $| \rho(z) |$ in place of the boundary defined by $|\rho(z)| = 1$ because if we were to plot the boundary then it would not be clear what parts are stable.
In this sequence of images, we present results for various values of $\theta \in [-0.4,5]$.  Larger values of $\theta$ such as $\theta = 100$ look
very similar to that of $\theta = 5$.
Tests on methods of other orders produce similar results involving transitions between finite and infinite regions of absolute stability
as $\theta$ increases from $0$.
The tradeoff between the size and shape of the stability regions for various quadrature rules is left for future work.

Before continuing, we stop to point out that
diagonally implicit Runge-Kutta methods (on nonequispaced points) can be 
constructed from this very same framework.
The point here is that because the term involving the difference $f( \eta^{[p+1]}_n) - f( \eta^{[p]}_n)$ in 
Eqn.\,\eqref{eqn:implicit-mod-be} does not contribute to the overall order of accuracy; the scaling in front of this term
can be modified so that each implicit solve uses the exact same time step, which would result in a
singly diagonally implicit Runge-Kutta (SDIRK) method.
This could be advantageous for easing the implementation
of SDC methods in large scale code bases.  In such a case the provisional solution would have to be modified in order to retain
constant time steps for each stage in the solver.
This would mean an extra correction or a (low-order) polynomial interpolation step would be necessary to not lose the
starting accuracy found in the provisional solution, which would again modify the regions of absolute stability for solvers of various orders.

Similar modifications have recently been explored on nonequispaced points from a linear algebra perspective.
In \cite{Weiser15}, the author makes use of this vantage and optimizes their solvers by 
modifying the coefficients in the fixed point iteration matrices.
It is pointed out that there are a number
of items that could be optimized, such as the spectral radius of the solver (in order to optimize the convergence rate of the sweeps), 
the matrix norm (for the purposes of reducing the error under the assumption of a small number of sweeps),
the error at the final time, the average reduction factor in each 
sweep block (for the purposes of adaptively choosing the number of sweeps, which could include flexible or greedy sweeps), and so on.
Even though SDC methods are a subset of Runge-Kutta methods, and all of these options can be found by looking at this more general
class of methods, one key advantage SDC methods enjoy is
they do not typically sacrifice the difficult order conditions that a more generic RK method would have to address.
At the same time, SDC methods still have ample levers to tune for optimization purposes.


\begin{figure}
\begin{center}

    \begin{tabular}{cc}
        \includegraphics[width=0.3\textwidth]{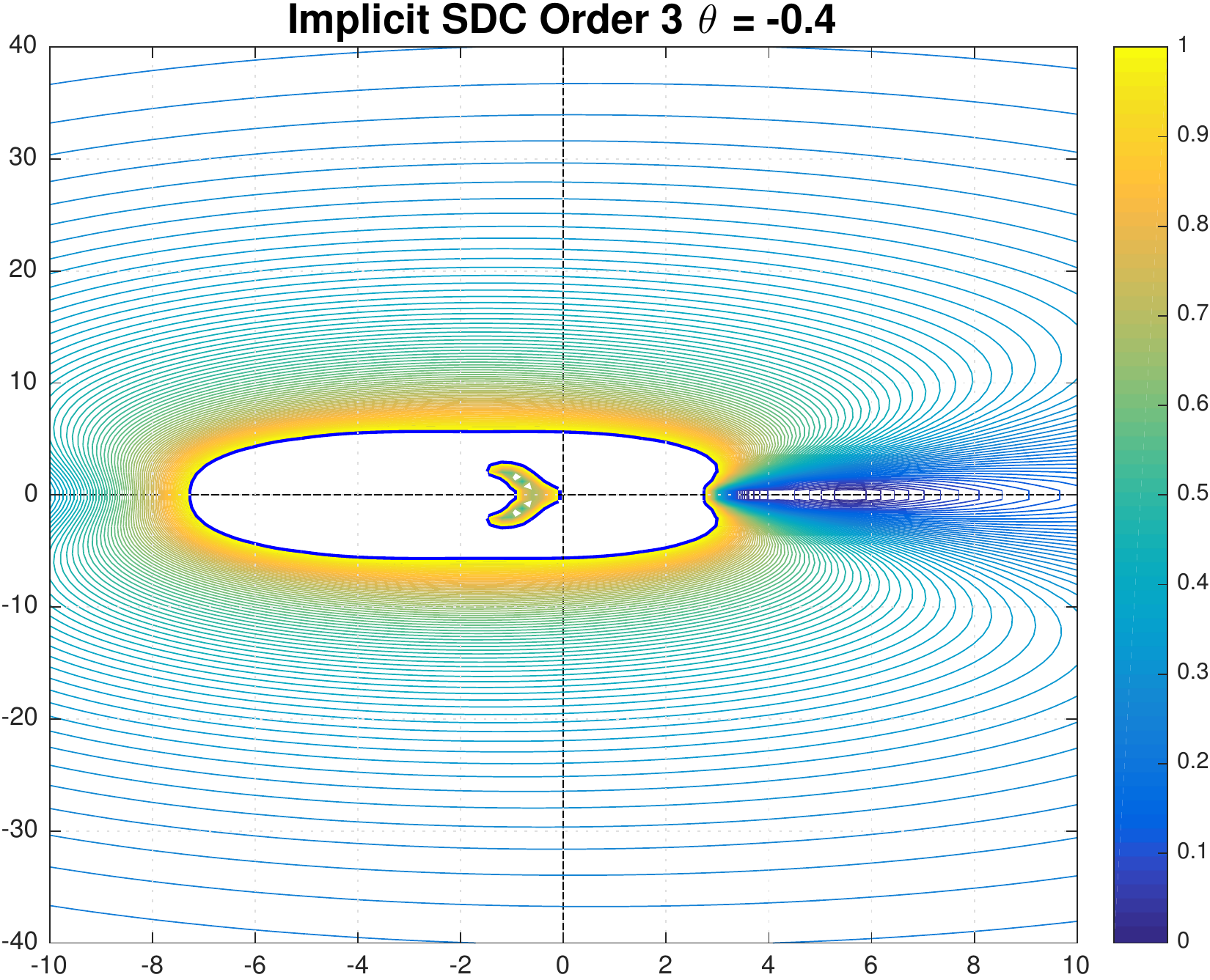}
        \includegraphics[width=0.3\textwidth]{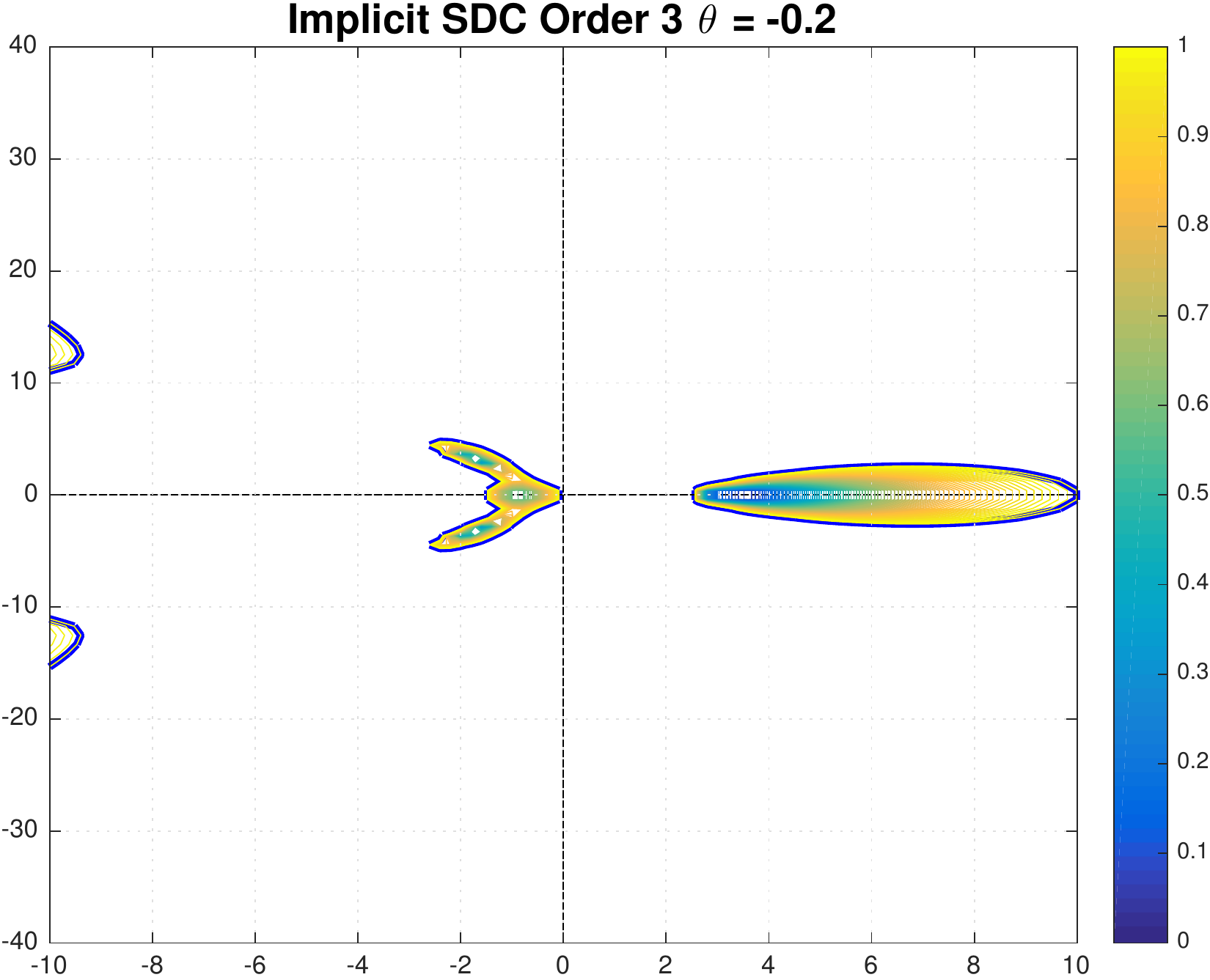}
        \includegraphics[width=0.3\textwidth]{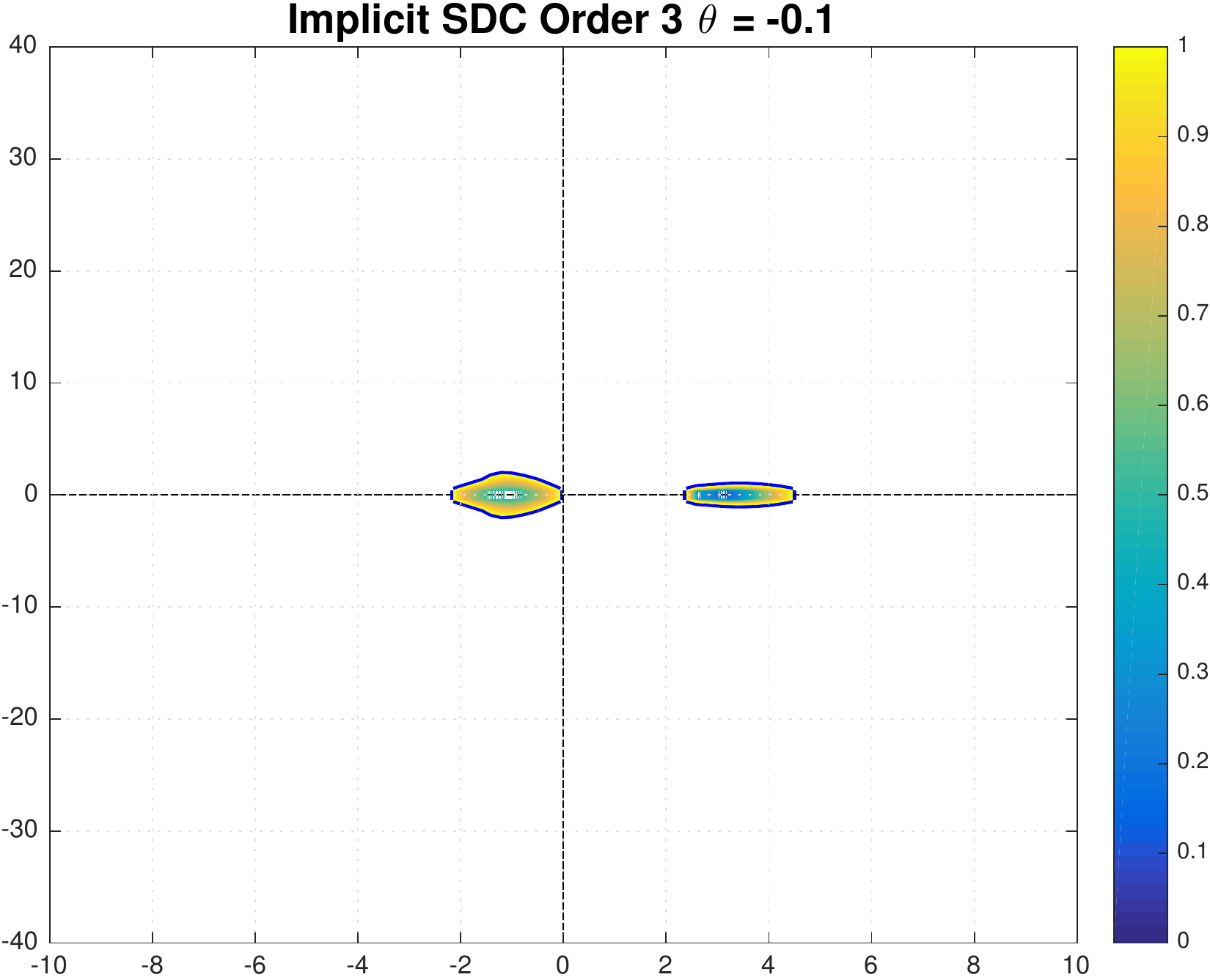} \\
        \includegraphics[width=0.3\textwidth]{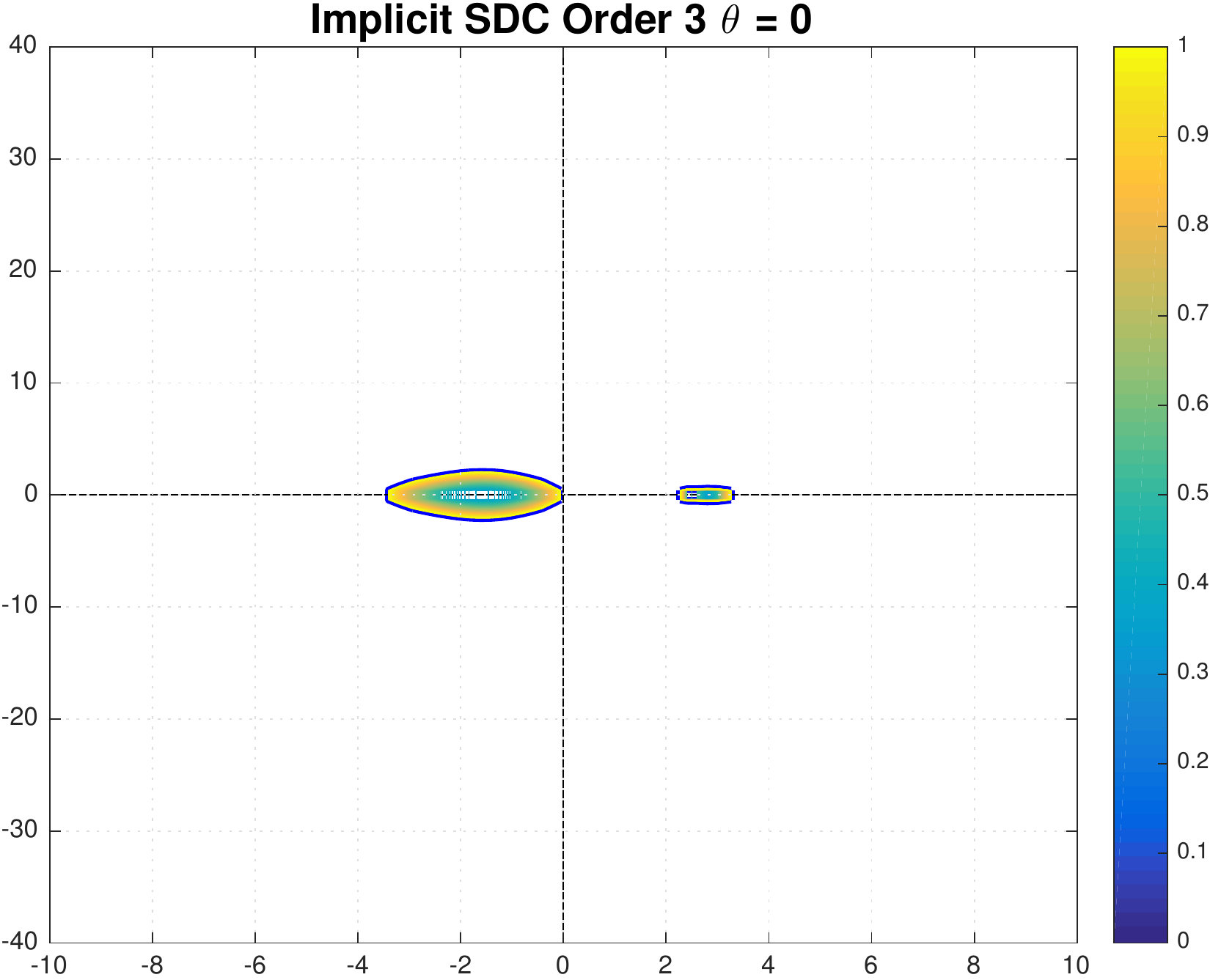}
        \includegraphics[width=0.3\textwidth]{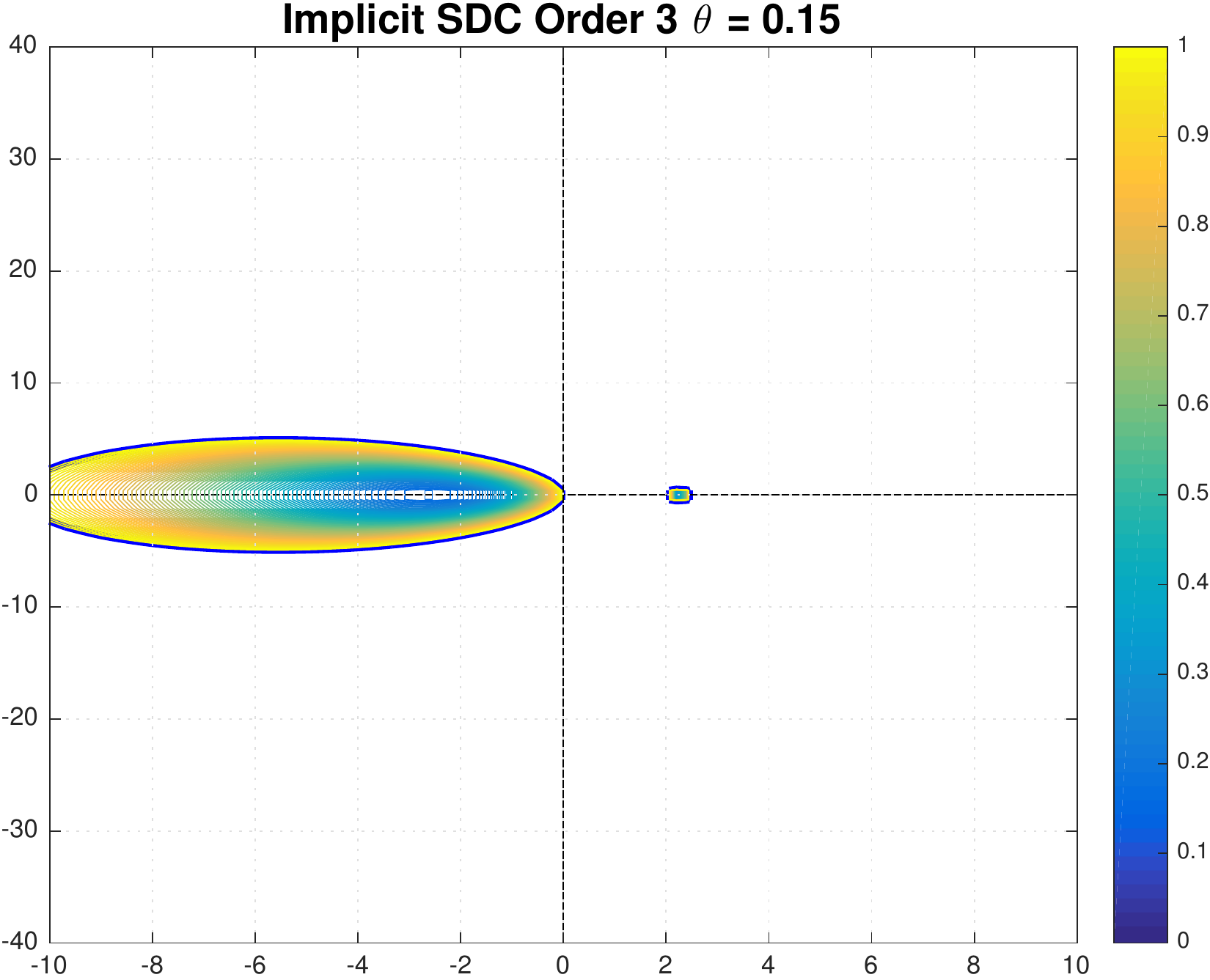}
        \includegraphics[width=0.3\textwidth]{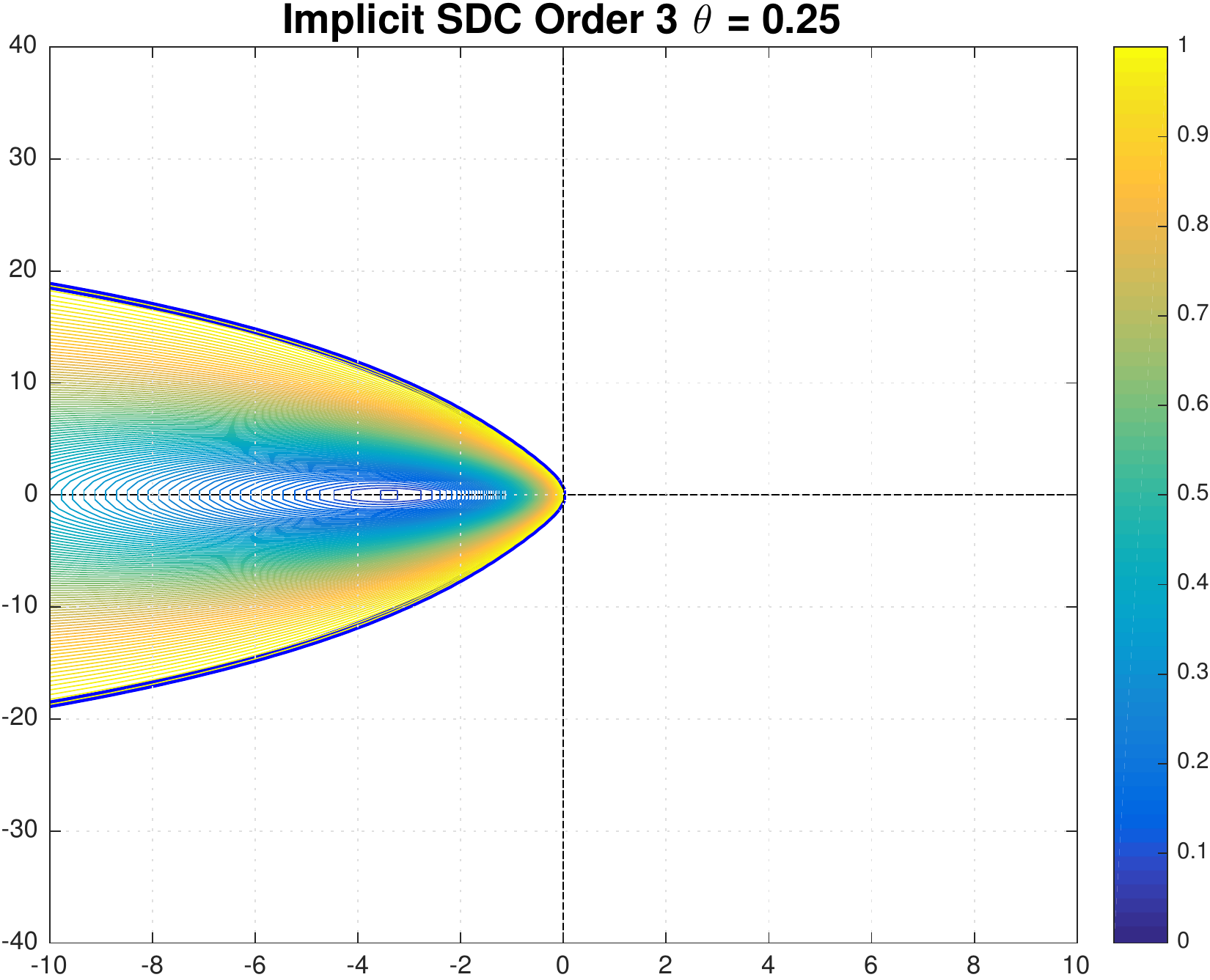} \\
        \includegraphics[width=0.3\textwidth]{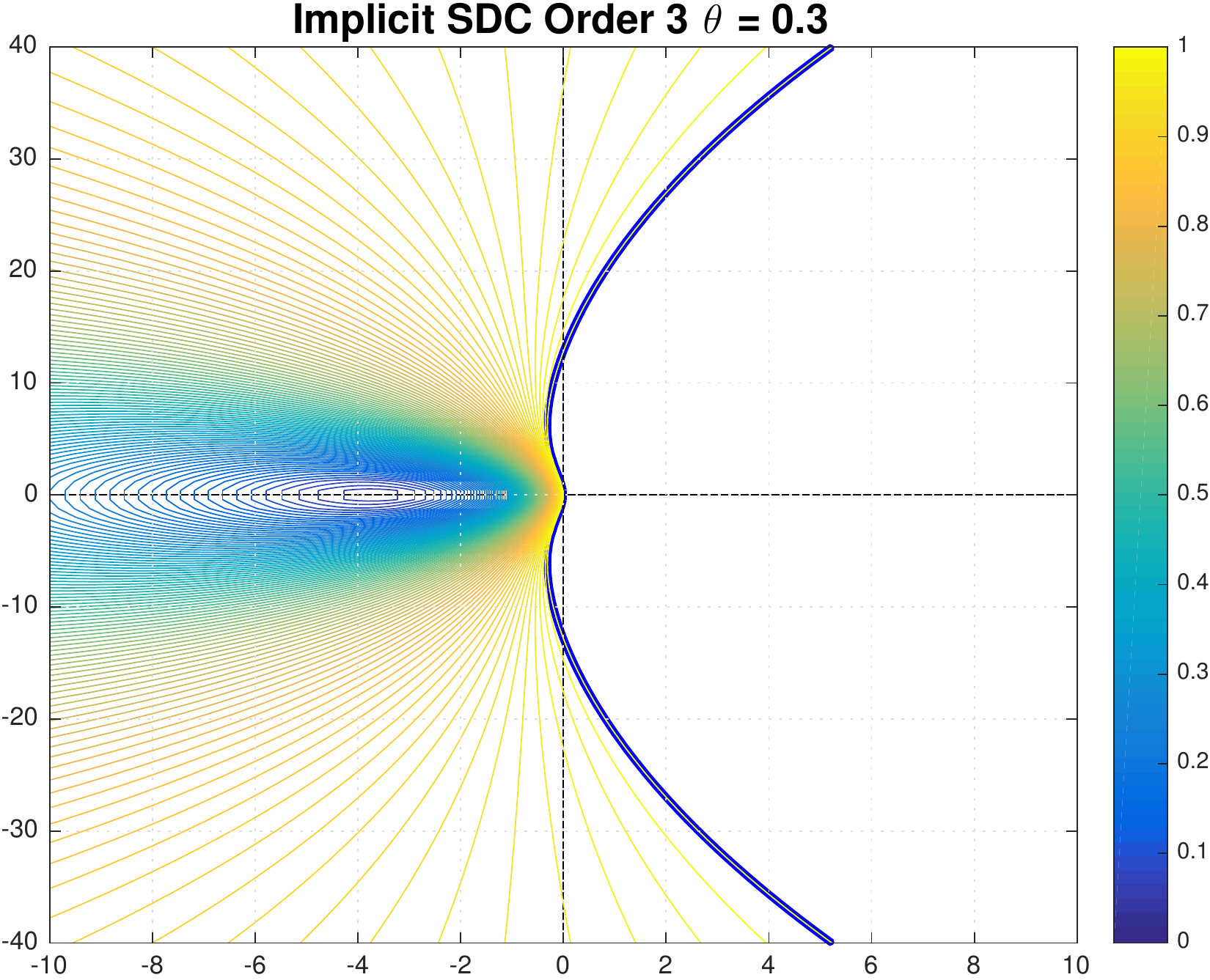}
        \includegraphics[width=0.3\textwidth]{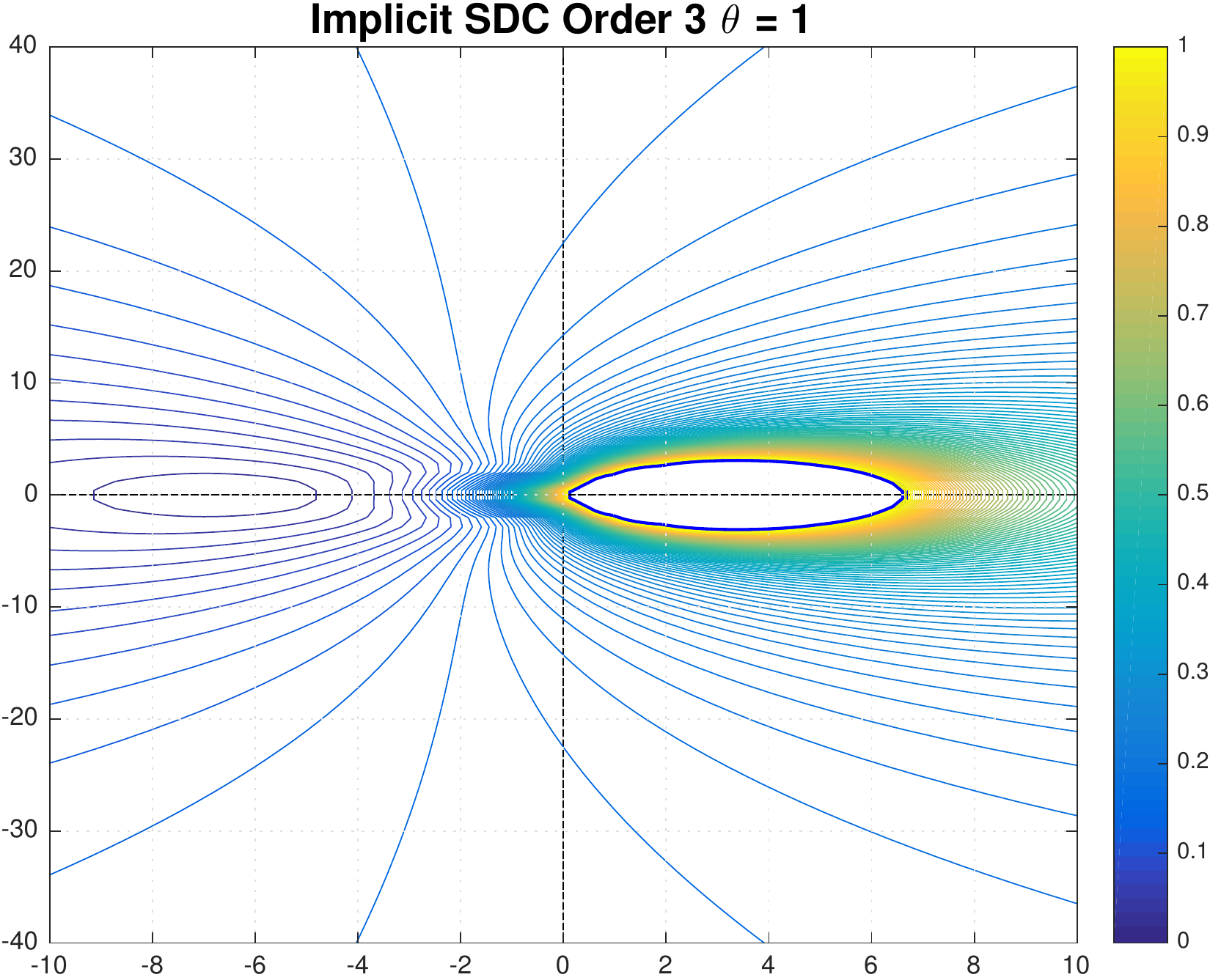}
        \includegraphics[width=0.3\textwidth]{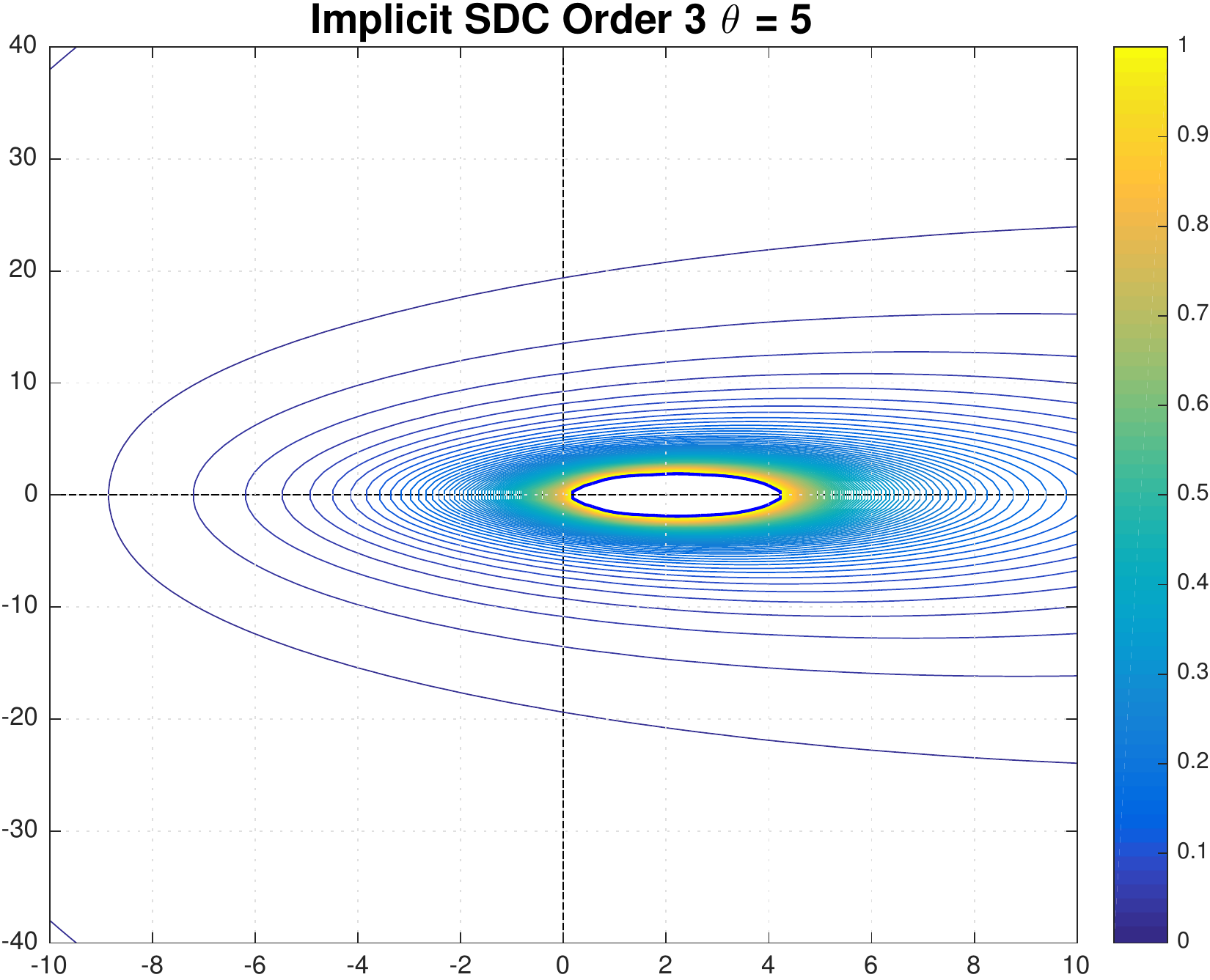} \\
    \end{tabular}
\caption{Stability regions for modified implicit methods.  Here, we perform a parameter study to the modified implicit SDC method
introduced in Eqn.\,\eqref{eqn:implicit-mod-be}; the case with $\theta = 0$ is explicit for each of the correction steps because there is no backward Euler time step to be
solved for, however the stability region is different than the same order Picard method because the initial guess (provisional solution) is computed using 
backward Euler time steps, whereas the Picard method makes use of forward Euler steps for its initial guess.  
Somewhere in the interval $\theta \in [0,1]$ there is a transition between a finite and infinite regions of absolute stability.
Infinite, and large regions are typically desirable when performing implicit solves.
Large values of $\theta$ increase the regions of absolute stability but at the cost of stiff inversions and larger error constants.
\label{fig:stability-plots-imp}}
\end{center}
\end{figure}

\subsection{Modified semi-implicit SDC methods}

Recall that the semi-implicit SDC (SISDC) method begins with a partition of the right hand side into two functions
$f_I$ and $f_E$ via
\begin{equation}
y' = f(y), \quad f(y) = f_I(y) + f_E(y), \quad y(0) = y_0,
\end{equation}
and then they apply a forward Euler/backward Euler (FE/BE) pair to the right hand side
defined in \eqref{eqn:sisdc-classical}:
\begin{equation}
\begin{aligned}
    \eta_n^{[p+1]} &= \eta_{n-1}^{[p+1]} + h_n
    \left[  f_I(\eta_{n}^{[p+1]})-f_I(\eta_{n}^{[p]})\right] \\
    &+ h_n 
    \left[  f_E(\eta_{n-1}^{[p+1]})-f_E(\eta_{n-1}^{[p]})\right] + 
    h \sum_{m=1}^M w_{n,m}f(\eta_m^{[p]}).
\end{aligned}
\end{equation}

With a straightforward extension the results from the present work point out that
high-order accuracy  can be achieved
where there are no forward Euler time steps on the explicit term $f_E(y)$.  That is,
we propose examining the following modified SISDC method:
\begin{equation}
\label{eqn:sisdc-new}
    \eta_n^{[p+1]} = \eta_{n-1}^{[p+1]} + h_n
    \left[  f_I(\eta_{n}^{[p+1]})-f_I(\eta_{n}^{[p]})\right] + h \sum_{m=1}^M w_{n,m}f(\eta_m^{[p]}).
\end{equation}
Note that \emph{the ``base solver" for this method is not even consistent with the underlying ODE.}
This serves as another example of how the findings from this work permit modifications to
classical SDC methods in order to produce
nearby variations.  
As an additional benefit, this reduces the computational coding complexity by asking the user to only
define $f$ and $f_I$ as opposed to $f$, $f_I$ and $f_E$.  This type of modification can be readily found after understanding
the source of high-order accuracy inherent to the SDC framework.

Given that the iterative Picard methods demonstrate smaller errors by dropping the forward Euler terms in the right hand of the iterations from the SDC solver,
one might expect that this method has better accuracy than its SISDC parent.  In the event when $f_I = 0$ (or is small), this would be true because
in that case we would be comparing explicit SDC to Picard iteration, and we have already shown that those errors are smaller for some problems.
However, we will shortly see that
this is not necessarily the case.  Even though this modification
does not affect the overall order of the solver, we will show that for the following test case it does not improve the total overall error of the solver.
With that being said, we believe it is still important to understand the source of the overall order of the SDC solvers, because only then
can new methods be developed from the existing framework.

\subsubsection{Van der Pol's equation}

As a prototypical IMEX example, we include results for Van der Pol's equation:
\begin{equation}
x''(t) = -x(t) + \mu \left( 1 - x(t) \right)^2 x'(t)
\end{equation}
with appropriate initial conditions.  After making the usual transformation of $y_1(t) = x(t)$,
$y_2(t) = \mu x'(t)$, and rescaling time through $t \rightarrow t/\mu$, we have the following
system of differential equations \cite{Minion03,Layton08}:
\begin{equation}
y_1' = y_2, \quad y_2' = \frac{ -y_1 + (1 - y_1^2 ) y_2 }{ \eps },
\quad \eps = \frac{1}{\mu^2}.
\end{equation}
In an IMEX setting, this problem is typically split into $f_E( y ) = \left( y_2, 0 \right)$
and $f_I = \left( 0, \left( -y_1 + (1 - y_1^2 ) y_2 \right)/\eps \right)$
as an effort to account for the stiffness as $\eps \to 0$.
For this problem we only seek to verify the high-order accuracy of 
the classical semi-implicit SDC method defined in\,\eqref{eqn:sisdc-classical}, denoted by SISDC, as well as the modified solver 
defined in \eqref{eqn:sisdc-new}, denoted by ``modified SISDC."  With this aim in mind, we set $\eps = 1$ so that the equations remain non-stiff, and we integrate
to a long final time of $T = 4$.  The initial conditions are the same as those found in an example in \cite{Layton08}, which are
$y_1(0) = 2$ and $y_2(0) = -0.666666654321$.
In Table \ref{tab:vdp-convergence}, we compare a convergence study for the fourth-order
versions of these two methods where we use a total of four equispaced quadrature points, a provisional
solution defined by the split forward/backward Euler method, as well as three corrections
in the solver.  For this problem, we find that the classical SISDC method has slightly smaller errors, despite what
theory might otherwise predict we could observe.  For problems where $f_I$ is negligible or small, the modified
method should outperform the SISDC solver.
Other values for $\epsilon$ produce similar findings, where the usual order reduction can be found as $\epsilon$ approaches zero.
In other cases, the two solvers have similar behavior, and both show high order accuracy for large values of $\epsilon$.  For brevity, these other results are omitted.


\begin{table}
\begin{center}
\caption{Van der Pol oscillator.    Here we present numerical results where we compare 
the implicit classical method defined in \eqref{eqn:sisdc-classical} as well as the modified semi-implicit SDC method
defined in \eqref{eqn:sisdc-new} against each other.  Despite the fact that 
theory can show that the errors could be smaller for the modified method that relies solely on backward Euler time stepping
embedded within Picard iteration, in this case the classical SISDC
method based forward/backward Euler time stepping outperforms the other solver with its smaller error constants.
\label{tab:vdp-convergence}}

\begin{tabular}{|r||c|c||c|c|}
\hline
\bf{Mesh} & {\bf SISDC} & \bf{Order} & {\bf Modified SISDC} & \bf{Order}\\
\hline
\hline
$   4$ & $2.24\times 10^{-02}$ & --- & $6.45\times 10^{-02}$ & ---\\
\hline
$   8$ & $6.06\times 10^{-04}$ & $5.21$ & $2.84\times 10^{-03}$ & $4.51$\\
\hline
$  16$ & $4.11\times 10^{-05}$ & $3.88$ & $1.91\times 10^{-04}$ & $3.89$\\
\hline
$  32$ & $3.44\times 10^{-06}$ & $3.58$ & $1.46\times 10^{-05}$ & $3.71$\\
\hline
$  64$ & $2.56\times 10^{-07}$ & $3.75$ & $1.01\times 10^{-06}$ & $3.85$\\
\hline
$ 128$ & $1.78\times 10^{-08}$ & $3.85$ & $6.68\times 10^{-08}$ & $3.92$\\
\hline
$ 256$ & $1.17\times 10^{-09}$ & $3.93$ & $4.29\times 10^{-09}$ & $3.96$\\
\hline
$ 512$ & $7.26\times 10^{-11}$ & $4.01$ & $2.69\times 10^{-10}$ & $3.99$\\
\hline
\end{tabular}
\end{center}
\end{table}


\section{Conclusions}
\label{section:conclusions}

In this work we present rigorous error bounds for both explicit and implicit spectral deferred correction methods.
Unlike most presentations that introduce SDC methods as a method that iteratively corrects provisional solutions
by solving an error equation, our work hinges on the fact that the basic solver can be recast as a variation on Picard iteration.
This observation allows new SDC methods to be developed through modifications of the (forward or backward) Euler part of the
iterative procedure.  In addition, we present some analysis for SDC methods constructed with higher order base solvers.  
In the numerical results section we present some sample variations that serve to indicate that the choice of the base solver
need not be consistent with the underlying ODE in order to obtain a method that converges.
That is, our findings indicate that it is not important to use the same low-order solver for each correction step because the
desired high-order accuracy can be found in the integral of the residual.
However, the choice of the base solver certainly has an impact on the overall scheme.  For example, up to the degree of precision of
the underlying quadrature rule, the choice of the forward or backward
Euler method or even an inconsistent base solver leads to a single pickup on the order of accuracy of the solver with each correction step, whereas 
the choice of a trapezoidal rule for a base solver yields two orders of pickup with each correction step.
For stiff problems, an implicit method
constructed with the backward Euler method (or some variation of it) is certainly preferable due to the larger regions of absolute stability.
Future work involves further analysis of embedded high-order base solvers as well as exploring further modifications of the solver to modify regions of absolute stability
of existing solvers for explicit, implicit, and semi-implicit SDC methods. \\

\noindent {\bf Acknowledgements.}  We would like to thank the anonymous referees for their thoughtful comments, suggestions that improved the quality of this manuscript,
and recommendations for future research.
The work of D.C. Seal was supported by the Naval Academy Research Council.

\newpage
\appendix


\bibliography{convergence-proofs-sdc_v7}

\end{document}